\newcommand{\HE}{Namie of Handling Editor}
\newcommand{\DoS}{Month/Day/Year}
\newcommand{\DoA}{Month/Day/Year}
\newcommand{\CA}{Name of Corresponding Author}
\newcommand{\Names}{Paola Ferrari, Isabella Furci, and Stefano Serra--Capizzano}
\newcommand{\Title}{Multilevel symmetrized Toeplitz structures and spectral distribution results for the related matrix-sequences}
\newtheorem{example}[theorem]{Example}
\newcommand{\E}{\mathrm{e}}
\begin{document}

\bibliographystyle{plain}

\setcounter{page}{1}

\thispagestyle{empty}

 \title{\Title\thanks{Received
 by the editors on \DoS.
 Accepted for publication on \DoA. 
 Handling Editor: \HE. Corresponding Author: \CA}}

\author{
Paola Ferrari \thanks{ Department of Science and high Technology, University of Insubria, Como, 22100, Italy (pferrari@uninsubria.it). Supported by INdAM Research group GNCS.}
\and
Isabella Furci \thanks{Department of Mathematics and Informatics, University of Wuppertal, Wuppertal,
42119, Germany (furci@uni-wuppertal.de). Supported by INdAM Research group GNCS.}
\and  
Stefano Serra--Capizzano \thanks{ Department of Humanities and Innovation, University of Insubria, Como, 22100, Italy (s.serracapizzano@uninsubria.it).  Supported by INdAM Research group GNCS.}}

\markboth{\Names}{\Title}

\maketitle

\begin{abstract}
In recent years, motivated by computational purposes, the singular value and spectral features of the symmetrization of Toeplitz matrices  generated by a Lebesgue integrable function have been studied. Indeed, under the assumptions that $f$ belongs to $L^1([-\pi,\pi])$ and it has real Fourier coefficients, the spectral and singular value distribution of the matrix-sequence $\{Y_nT_n[f]\}_n$ has been identified, where $n$ is the matrix-size, $Y_n$ is the anti-identity matrix, and $T_n[f]$ is the Toeplitz matrix generated by $f$.
In this note, we consider the multilevel Toeplitz matrix $T_{\bf n}[f]$  generated by $f\in L^1([-\pi,\pi]^k)$, $\bf n$ being a multi-index identifying the matrix-size, and we prove spectral and singular value distribution results for the matrix-sequence $\{Y_{\bf n}T_{\bf n}[f]\}_{\bf n}$ with $Y_{\bf n}$ being the corresponding tensorization of the anti-identity matrix. 
\end{abstract}

\begin{keywords}
 Toeplitz matrices, Hankel matrices, symmetrization, singular value distribution, eigenvalue distribution
\end{keywords}
\begin{AMS}
 15B05, 47B06
\end{AMS}




\section{Introduction}

Spectral and singular value distribution results \cite{MR952991,FasinoTilli,MR890515,SerraLibro1,SerraLibro2,MR1671591,Tyrtyshnikov19961,MR1481397} of structured matrix-sequences represent one anong the key ingredients in the design and in the convergence analysis of several  well-known (preconditioned) iterative methods \cite{Saad, Hackbusch}. In many contexts, symmetry is a particularly desirable property for a matrix when we want to solve an associated linear system with iterative methods. Hence,  from the original work by Pestana and Wathen \cite{doi:10.1137/140974213}, symmetrization procedures combined with various preconditioning techniques have been introduced and studied for the very purpose of developing a competitive method for the solution of real non-symmetric structured systems. 
In particular the singular value and spectral features of the symmetrization of Toeplitz matrices  generated by a Lebesgue integrable function have been recently  discussed and exploited in several settings. Indeed, under the assumptions that $f$ belongs to $L^1([-\pi,\pi])$ and it has real Fourier coefficients,  the spectral and singular value distribution of the matrix-sequence $\{Y_nT_n[f]\}_n$ has been studied, where $Y_n$ is the anti-identity matrix and $T_n[f]$ is the Toeplitz matrix generated by $f$  \cite{FFHMS, mazza-pestana}. Several extensions of the latter result have been also treated. For example the generalization in the context of block structures is treated in \cite{FFHMS}, i.e. assuming $f$ a matrix-valued function, while the spectral distribution of  matrix-sequences of the form $\{h(T_n[f])\}_n$, with $h$ being an analytic function, is studied in \cite{FBS}. 

The  purpose of this paper is to extend the result concerning the eigenvalue and singular value distributions of the unilevel matrix-sequence $\{Y_nT_n[f]\}_{{n}}$ to the symmetrization of multilevel matrix-sequences of the form $\{T_{\mathbf{n}}[f]\}_{\mathbf{n}}$, where $f$ is a $k$-variate function $f\in L^1([-\pi,\pi]^k)$.   The proof of the main Theorem of the paper is based on the relation between  a Toeplitz matrix and its generating function $f$ and on the notion of approximating class of sequences (a.c.s.), as it has been done in the simpler unilevel case in \cite{FFHMS}: it should be noted that the approach and the proof techniques in \cite{mazza-pestana} and in the very recent \cite{mazza-pestana-m} are different since the authors employ the powerful Generalized Locally Toeplitz (GLT) matrix-sequences technology, which in turn heavily relies on the a.c.s. notion. These preliminary concepts are introduced in Section \ref{section:preliminaries} in the general $k$-level setting. In Section \ref{section:main} we give our main result on the asymptotic distributions of $\{Y_{\mathbf{n}}T_{\mathbf{n}}[f]\}_{\mathbf{n}}$, first considering the case where $f$ is a trigonometric polynomial, then extending the result to $f\in L^1([-\pi,\pi]^k)$, $k>1$. In addition, Section \ref{section:further} is devoted to further results and observations, where we dedicate particular attention to the case where $f$ is a $k$-variate separable function. In Section \ref{section:numerics} we collect relevant experiments, showing the numerical validity and accuracy of our theoretical findings. Finally,  Section \ref{sec:conclusion} is devoted to conclusions and open problems.

\section{Preliminaries on Toeplitz matrices}\label{section:preliminaries}

Here we define the multi-index $\mathbf{n}=(n_1,n_2,\dots,n_k)$ where each $n_j$ is a positive integer.  When writing the expression ${{\bf n}\to\infty}$ we mean that every component of the vector ${\bf n}$ tends to infinity i.e. $\min_{1\le j\le k} n_j\to \infty$.

Let $f:$~$[-\pi,\pi]^k\to \mathbb{C}$ be a function belonging to $L^{1}([-\pi,\pi]^k)$, and periodically extended to $\mathbb{R}^k$. We define $T_{\mathbf{n}}[f] $ the multilevel Toeplitz matrix 
of dimensions $N(\mathbf{n})\times N(\mathbf{n})$, with $N(\mathbf{n})= n_1n_2 \dots n_k$, as follows

\begin{equation*}
T_{\bf n}[f] =\sum_{|j_1|<n_1}\ldots \sum_{|j_k|<n_k} (J_{n_1}^{j_1} \otimes \cdots\otimes J_{n_k}^{j_k}){\hat{f}_{\mathbf{j}}}, \quad \mathbf{j}=(j_1,j_2,\dots,j_k)\in \mathbb{Z}^k.
\end{equation*}
In the latter, the quantities
 \[
\hat{f}_{\mathbf{j}}=\frac{1}{(2\pi)^k}\int_{[-\pi,\pi]^k}f(\boldsymbol{\theta}){\rm e}^{\iota \left\langle { \bf j},\boldsymbol{\theta}\right\rangle}\, {\rm d}\boldsymbol{\theta},\]
 with $\left\langle { \bf j},\boldsymbol{\theta}\right\rangle=\sum_{t=1}^kj_t\theta_t$, $\iota^2=-1$, are the Fourier coefficients of $f$ and 
  $ J^{j}_{n}$  is the $n \times n$ matrix whose $(l,h)$-th entry equals 1 if $(l-h)=j$ and $0$ otherwise.
 
 \begin{lemma}\cite{SerraLibro2}
\label{lemm:tensor_prod}
Let $f_1 , \dots, f_k \in L^1([-\pi,\pi])$,  $\mathbf{n}=(n_1,n_2, \dots, n_k) \in \mathbb{N}^k$. Then,
\begin{equation*}
 T_{ n_1} [ f_1 ] \otimes \dots \otimes T_{n_k} [ f_k ]= T_{\mathbf{n}} [f_1 \otimes \dots \otimes f_k ],
\end{equation*}
where the Fourier coefficients of $f_1 \otimes \dots \otimes f_k$ are given by
\begin{equation*}
( f_1 \otimes \dots \otimes f_k)_{\mathbf{j}} = ( f_1 )_{j_1} \dots ( f_k )_{j_k}, \quad \mathbf{j} \in \mathbb{Z}^k.
\end{equation*} 
\end{lemma}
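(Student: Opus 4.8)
The plan is to verify the identity by direct computation, reducing it to the multilinearity of the Kronecker product together with Fubini's theorem. First I would observe that the function $f_1\otimes\cdots\otimes f_k$, defined pointwise by $(f_1\otimes\cdots\otimes f_k)(\boldsymbol\theta)=f_1(\theta_1)\cdots f_k(\theta_k)$, belongs to $L^1([-\pi,\pi]^k)$: by Tonelli's theorem $\int_{[-\pi,\pi]^k}|f_1(\theta_1)\cdots f_k(\theta_k)|\,{\rm d}\boldsymbol\theta=\prod_{t=1}^k\int_{-\pi}^\pi|f_t(\theta_t)|\,{\rm d}\theta_t<\infty$, so the multilevel Toeplitz matrix $T_{\mathbf{n}}[f_1\otimes\cdots\otimes f_k]$ is well defined. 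Fubini's theorem then splits the defining integral of its Fourier coefficients: for every $\mathbf{j}=(j_1,\dots,j_k)\in\mathbb{Z}^k$,
\[
(f_1\otimes\cdots\otimes f_k)_{\mathbf{j}}=\prod_{t=1}^k\frac{1}{2\pi}\int_{-\pi}^\pi f_t(\theta_t){\rm e}^{\iota j_t\theta_t}\,{\rm d}\theta_t=(f_1)_{j_1}\cdots(f_k)_{j_k},
\]
which is the claimed factorization of the Fourier coefficients.

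Next I would expand the left-hand side. Each unilevel factor is the finite sum $T_{n_t}[f_t]=\sum_{|j_t|<n_t}(f_t)_{j_t}\,J_{n_t}^{j_t}$; since the Kronecker product is linear in each of its arguments, it distributes over these finite sums, giving
\[
T_{n_1}[f_1]\otimes\cdots\otimes T_{n_k}[f_k]=\sum_{|j_1|<n_1}\cdots\sum_{|j_k|<n_k}\Bigl(\prod_{t=1}^k (f_t)_{j_t}\Bigr)\bigl(J_{n_1}^{j_1}\otimes\cdots\otimes J_{n_k}^{j_k}\bigr).
\]
Comparing with the definition $T_{\mathbf{n}}[g]=\sum_{|j_1|<n_1}\cdots\sum_{|j_k|<n_k}(J_{n_1}^{j_1}\otimes\cdots\otimes J_{n_k}^{j_k})\,\hat{g}_{\mathbf{j}}$ applied to $g=f_1\otimes\cdots\otimes f_k$ and inserting the Fourier-coefficient factorization just established, the two expressions agree term by term, which proves the identity. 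As a consistency check one can also argue entrywise: the $(\mathbf{l},\mathbf{h})$ entry of $T_{n_1}[f_1]\otimes\cdots\otimes T_{n_k}[f_k]$ equals $\prod_{t=1}^k(f_t)_{l_t-h_t}=(f_1\otimes\cdots\otimes f_k)_{\mathbf{l}-\mathbf{h}}$, which is exactly the $(\mathbf{l},\mathbf{h})$ entry of $T_{\mathbf{n}}[f_1\otimes\cdots\otimes f_k]$ by multilevel Toeplitzness.

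There is no genuine obstacle here; the only points needing a little care are the verification that $f_1\otimes\cdots\otimes f_k\in L^1([-\pi,\pi]^k)$ and that Fubini legitimately splits the Fourier-coefficient integral — both immediate from the product structure — together with keeping the ordering of the Kronecker factors consistent between $T_{n_1}[f_1]\otimes\cdots\otimes T_{n_k}[f_k]$ and the definition of $T_{\mathbf{n}}$, so that each $J_{n_1}^{j_1}\otimes\cdots\otimes J_{n_k}^{j_k}$ is paired with the matching coefficient $(f_1)_{j_1}\cdots(f_k)_{j_k}$.
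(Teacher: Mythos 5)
Your proof is correct: the paper states this lemma only with a citation to the reference \cite{SerraLibro2} and gives no proof, and your argument (Tonelli/Fubini to factor the Fourier coefficients, multilinearity of the Kronecker product to expand the left-hand side, and matching terms with the definition of $T_{\mathbf{n}}$) is exactly the standard definition-chasing proof found there. No gaps.
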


Throughout the paper we indicate by $\{T_{\bf n}[f]\}_{\mathbf{n}}$ the matrix-sequence whose elements are the matrices $T_{\bf n}[f]$. The function $f$ is called the \emph{generating function} of $T_\mathbf{n}[f]$.

 If $f$ is complex-valued, then $T_\mathbf{n}[f]$ is non-Hermitian for all sufficiently large $\mathbf{n}$. Conversely, if $f$ is real-valued, then $T_\mathbf{n}[f]$ is Hermitian for all $\mathbf{n}$. If $f$ is real-valued and nonnegative, but not identically zero almost everywhere, then $T_\mathbf{n}[f]$ is Hermitian positive definite for all $\mathbf{n}$. If $f$ is real-valued and even, $T_\mathbf{n}[f]$ is symmetric for all $\mathbf{n}$ \cite{MR2108963,MR2376196}.

	
	The singular value and spectral distribution of Toeplitz matrix-sequences has been well studied in the past few decades. Ever since Szeg{\H{o}} in \cite{MR890515} showed that the eigenvalues of the Toeplitz matrix $T_n[f]$ generated by real-valued $f\in L^{\infty}([-\pi,\pi])$ are asymptotically distributed as $f$. Moreover, Avram and Parter \cite{MR952991,MR851935} proved that the singular values of $T_n[f]$ are distributed as $|f|$ for a complex-valued $f\in L^{\infty}([-\pi,\pi])$. Tyrtyshnikov and Zamarashkin \cite{Tyrtyshnikov19961,MR1258226,MR1481397}and, independently, Tilli \cite{MR1671591} later extended the spectral and singular value theorems to Toeplitz matrices $T_n[f]$ generated by functions $f\in L^1([-\pi,\pi])$. Recently, Garoni, Serra--Capizzano, and Vassalos \cite{MR3399336} provided the same theorem \textcolor{black}{in the unilevel case} based on the theory of Generalized Locally Toeplitz (GLT) sequences \cite{SerraLibro1}. \textcolor{black}{As for} the changes in the singular value and spectral distribution of Toeplitz matrix-sequences after certain matrix operations \textcolor{black}{that are related to our concerned problems, much work was done by Tyrtyshnikov and Serra--Capizzano in \cite{TYRTYSHNIKOV1994225,CAPIZZANO2001121,glt-1,glt-2}.}
	
	All these consideration suggest that  the candidate function which describes the asymptotic distributions of Toeplitz matrix-sequences is the generating function, but this is actually verified only under specific hypotheses. Indeed, if a Toeplitz matrix is not Hermitian, in general we cannot describe its spectral properties studying the generating function. However, the knowledge of the spectral and singular value information is crucial in the design proper and fast methods for the solution of Toeplitz systems. Then, the study of strategies and their properties that permit us to symmetrize Toeplitz linear systems is fundamental and convenient.
	
	\textcolor{black}{Throughout this work, we assume} that $f\in L^1([-\pi, \pi]^k)$ and is periodically extended to $\mathbb{R}^k$. Furthermore, we follow all standard notation and terminology introduced in \cite{SerraLibro1}: let $C_c(\mathbb{C})$ (or $C_c(\mathbb{R})$) be the space of complex-valued continuous functions defined on $\mathbb{C}$ (or $\mathbb{R}$) with bounded support and \textcolor{black}{let $\eta$ be a functional, i.e. any function defined on some vector space which takes values in $\mathbb{C}$. Also, if $g:D\subset \mathbb{R}^k \to \mathbb{K}$ ($\mathbb{R}$ or $\mathbb{C}$) is a measurable function defined on a set $D$ with $0<\mu_k(D)<\infty$, the functional $\eta_g$ is denoted such that
	\[
	\eta_g:C_c(\mathbb{K})\to \mathbb{C} \quad \text{and} \quad \eta_g(F)=\frac{1}{\mu_k(D)}\int_D F(g(\mathbf{x}))\,d\mathbf{x}.
	\]
\begin{definition}{\rm \cite[Definition 3.1]{SerraLibro1}(Singular value and eigenvalue distribution of a matrix-sequence)}\label{def:spectral_distribution}
Let $\{A_\mathbf{n}\}_{n}$ be a matrix-sequence.
\begin{enumerate}
\item We say that $\{A_\mathbf{n}\}_{\mathbf{n}}$ has an asymptotic singular value distribution described by a functional $\eta:C_c(\mathbb{R})\to \mathbb{C},$ and we write $\{A_\mathbf{n}\}_{\mathbf{n}} \sim_{\sigma}\eta,$ if
\[
\lim_{\mathbf{n}\to \infty} \frac{1}{N(\mathbf{n})}\sum_{j=1}^{N(\mathbf{n})}F(\sigma_j(A_\mathbf{n}))=\eta(F),\quad \forall F \in C_c(\mathbb{R}).
\] If $\eta=\eta_{|f|}$ for some measurable $f:D \subset \mathbb{R}^k \to \mathbb{C}$ defined on a set $D$ with $0<\mu_k(D)<\infty,$ we say that $\{A_\mathbf{n}\}_{\mathbf{n}}$ has an asymptotic singular value distribution described by $f$ and we write $\{A_\mathbf{n}\}_{\mathbf{n}} \sim_{\sigma} f.$
\item We say that $\{A_\mathbf{n}\}_{\mathbf{n}}$ has an asymptotic eigenvalue (or spectral) distribution described by a functional $\eta:C_c(\mathbb{R})\to \mathbb{C},$ and we write $\{A_\mathbf{n}\}_{\mathbf{n}} \sim_{\lambda}\eta,$ if
\[
\lim_{\mathbf{n}\to \infty} \frac{1}{N(\mathbf{n})}\sum_{j=1}^{N(\mathbf{n})}F(\lambda_j(A_\mathbf{n}))=\eta(F),\quad \forall F \in C_c(\mathbb{C}).
\] If $\eta=\eta_{f}$ for some measurable $f:D \subset \mathbb{R}^k \to \mathbb{C}$ defined on a set $D$ with $0<\mu_k(D)<\infty,$ we say that $\{A_\mathbf{n}\}_{\mathbf{n}}$ has an asymptotic eigenvalue (or spectral) distribution described by  $f$ and we write $\{A_\mathbf{n}\}_{\mathbf{n}} \sim_{\lambda} f.$
		\end{enumerate}
	\end{definition}}

\textcolor{black}{In the following the generalized Szeg{\H{o}} theorem that describes the singular value and spectral distribution of Toeplitz sequences is given in the multivariate setting. We refer to \cite[Theorem 3.5]{SerraLibro2} for a proof that is based on the notion of approximating class of sequences given in Definition \ref{def:ACS}.}
		
\begin{theorem}\label{szego}
Let  $f\in L^1([-\pi, \pi]^k)$, with $k\ge 1$. Then, $$\left\{T_{\bf n}[f]\right\}_{\mathbf{n}}\sim_\sigma f.$$
Moreover if $f$ is a real-valued function, then
$$\left\{T_{\bf n}[f]\right\}_{{\mathbf{n}}}\sim_\lambda f.$$
\end{theorem}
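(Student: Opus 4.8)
The plan is to establish the statement first for trigonometric polynomials, where both distributions come from an elementary Riemann-sum argument, and then to reach an arbitrary symbol $f\in L^1([-\pi,\pi]^k)$ by density together with the approximating class of sequences machinery of Definition~\ref{def:ACS}; this is precisely the scheme behind the proof reported in \cite[Theorem 3.5]{SerraLibro2}.

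\emph{Polynomial case.} Let $p(\boldsymbol{\theta})=\sum_{\|\mathbf{j}\|_\infty\le d}\hat{p}_{\mathbf{j}}\,\mathrm{e}^{\iota\langle\mathbf{j},\boldsymbol{\theta}\rangle}$ be a $k$-variate trigonometric polynomial of fixed degree $d$. First I would compare $T_{\bf n}[p]$ with the multilevel circulant $C_{\bf n}[p]$ obtained by replacing, in the definition of $T_{\bf n}[p]$, each $J_{n_t}^{j_t}$ by the $n_t\times n_t$ cyclic shift $Z_{n_t}^{j_t}$. This matrix is normal, it is simultaneously diagonalized by the tensor Fourier matrix $F_{n_1}\otimes\cdots\otimes F_{n_k}$, and its eigenvalues are exactly the values $p(2\pi\ell_1/n_1,\dots,2\pi\ell_k/n_k)$ with $0\le\ell_t<n_t$ for each $t$. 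Since $F\circ p$ and $F\circ|p|$ are continuous and $2\pi$-periodic in each variable, the normalized averages of $F$ over those values are Riemann sums and converge, as $\mathbf{n}\to\infty$, to $\frac{1}{(2\pi)^k}\int_{[-\pi,\pi]^k}F(p)\,d\boldsymbol{\theta}$ and $\frac{1}{(2\pi)^k}\int_{[-\pi,\pi]^k}F(|p|)\,d\boldsymbol{\theta}$, so that $\{C_{\bf n}[p]\}_{\mathbf{n}}\sim_\sigma p$, and $\{C_{\bf n}[p]\}_{\mathbf{n}}\sim_\lambda p$ when $p$ is real-valued. Next I would estimate $E_{\bf n}:=T_{\bf n}[p]-C_{\bf n}[p]$: telescoping $J_{n_1}^{j_1}\otimes\cdots\otimes J_{n_k}^{j_k}-Z_{n_1}^{j_1}\otimes\cdots\otimes Z_{n_k}^{j_k}$ one level at a time and using $\mathrm{rank}(J_{n_t}^{j_t}-Z_{n_t}^{j_t})\le 2|j_t|$ yields $\mathrm{rank}(E_{\bf n})\le c(k,d)\,N(\mathbf{n})/\min_{1\le t\le k}n_t=o(N(\mathbf{n}))$, while $\|E_{\bf n}\|$ is bounded uniformly in $\mathbf{n}$, only finitely many Fourier coefficients being involved. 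By the standard fact that a perturbation which is the sum of a term of rank $o(N(\mathbf{n}))$ and a term of uniformly bounded spectral norm leaves the singular value distribution unchanged — and, for Hermitian matrix-sequences, the eigenvalue distribution as well — one obtains $\{T_{\bf n}[p]\}_{\mathbf{n}}\sim_\sigma p$, and $\{T_{\bf n}[p]\}_{\mathbf{n}}\sim_\lambda p$ when $p$ is real-valued.

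\emph{Extension to $L^1$.} For a general $f\in L^1([-\pi,\pi]^k)$ I would choose trigonometric polynomials $p_m$ with $\|f-p_m\|_{L^1([-\pi,\pi]^k)}\to 0$ as $m\to\infty$ (for instance the multivariate Ces\`aro--Fej\'er means of $f$, which moreover remain real-valued when $f$ is). By linearity of $g\mapsto T_{\bf n}[g]$ and the trace-norm bound $\|T_{\bf n}[g]\|_1\le \frac{N(\mathbf{n})}{(2\pi)^k}\|g\|_{L^1}$ applied to $g=f-p_m$, one gets
\[
\|T_{\bf n}[f]-T_{\bf n}[p_m]\|_1\le \varepsilon(m)\,N(\mathbf{n}),\qquad \varepsilon(m):=\frac{\|f-p_m\|_{L^1}}{(2\pi)^k}\;\longrightarrow\;0\ \ (m\to\infty),
\]
and a term of such a small trace norm can be split as a term of rank at most $\sqrt{\varepsilon(m)}\,N(\mathbf{n})$ plus a term of spectral norm at most $\sqrt{\varepsilon(m)}$; hence $\{\{T_{\bf n}[p_m]\}_{\mathbf{n}}\}_m$ is an approximating class of sequences for $\{T_{\bf n}[f]\}_{\mathbf{n}}$. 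Since $\{T_{\bf n}[p_m]\}_{\mathbf{n}}\sim_\sigma p_m$ for each $m$ by the polynomial case and $p_m\to f$ in measure, the a.c.s.\ theorem on singular value distributions gives $\{T_{\bf n}[f]\}_{\mathbf{n}}\sim_\sigma f$. If $f$ is real-valued, then all the matrices $T_{\bf n}[p_m]$ and $T_{\bf n}[f]$ are Hermitian, $\{T_{\bf n}[p_m]\}_{\mathbf{n}}\sim_\lambda p_m$, and the a.c.s.\ theorem on eigenvalue distributions — applicable precisely because of Hermitianity — yields $\{T_{\bf n}[f]\}_{\mathbf{n}}\sim_\lambda f$.

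\emph{Main obstacle.} Conceptually nothing goes beyond the unilevel argument of \cite{FFHMS}. The point that demands the most care is the rank estimate $\mathrm{rank}(T_{\bf n}[p]-C_{\bf n}[p])=o(N(\mathbf{n}))$: this is exactly where the convention $\mathbf{n}\to\infty$, meaning $\min_t n_t\to\infty$, is essential, and it requires keeping track of the $k$ ``boundary layers'' of the multilevel tensor structure rather than of a single one. A secondary bookkeeping issue is to ensure that the Hermitian character is preserved all along the approximation of the second step, so that the eigenvalue — and not merely the singular value — distribution is genuinely transferred. Both points are handled in \cite[Theorem 3.5]{SerraLibro2}.
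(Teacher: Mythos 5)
The paper does not prove this theorem itself but defers to \cite[Theorem 3.5]{SerraLibro2}, whose proof follows exactly the scheme you outline: the polynomial/circulant comparison for the base case, then the a.c.s.\ machinery with the trace-norm bound $\|T_{\bf n}[g]\|_1\le N(\mathbf{n})(2\pi)^{-k}\|g\|_{L^1}$ and Fej\'er approximation, so your proposal is correct and takes essentially the same route. The one phrase to tighten is the claim that a perturbation of ``uniformly bounded spectral norm'' preserves the singular value distribution (it does not in general --- think of adding the identity); what actually does the job is that $E_{\bf n}$ has rank $o(N(\mathbf{n}))$ \emph{and} bounded norm simultaneously, hence trace norm $o(N(\mathbf{n}))$.
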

Moreover, we introduce the following definitions and a key lemma in order to prove our main distribution results in the next {section}.
Regarding the employed norms we use the following notation: $\|\cdot\|$ denotes the spectral norm for matrices (that is the maximal singular value, also called Schatten-$\infty$ norm), $\|\cdot\|_1$ denotes the trace norm for matrices (that is the sum of all the  singular values, also called Schatten-$1$ norm), and $\|\cdot\|_{L^1}$ denotes the standard $L^1$ norm for functions.
	
	\begin{definition}{\rm \cite[Definition 2.6]{SerraLibro2}(Approximating class of sequences)}\label{def:ACS}
		Let $\{A_\mathbf{n}\}_{\mathbf{n}}$ be a matrix-sequence and let $\{\{B_{\mathbf{n},m}\}_{\mathbf{n}}\}_m$ be a sequence of matrix-sequences. We say that $\{\{B_{\mathbf{n},m}\}_{\mathbf{n}}\}_m$ is an \textit{approximating class of sequences (a.c.s)} for $\{A_\mathbf{n}\}_{\mathbf{n}}$ if the following condition is met: for every $m$ there exists $n_m$ such that, for $n \geq n_m$,
		\[
		A_\mathbf{n}=B_{\mathbf{n},m}+R_{\mathbf{n},m}+N_{\mathbf{n},m},
		\]
		\[
		\text{rank}~R_{\mathbf{n},m}\leq c(m)N(\mathbf{n}) \quad {\rm and} \quad \|N_{\mathbf{n},m}\|\leq\omega(m),
		\]
		where $n_m$, $c(m)$, and $\omega(m)$ depend only on $m$ and \[\lim_{m\to\infty}c(m)=\lim_{m\to\infty}\omega(m)=0.\]
	\end{definition}
	
	We use $\{B_{\mathbf{n},m}\}_{\mathbf{n}}\xrightarrow{\text{a.c.s.\ wrt\ $m$}}\{A_\mathbf{n}\}_{\mathbf{n}}$ to denote that $\{\{B_{\mathbf{n},m}\}_{\mathbf{n}}\}_m$ is an a.c.s for $\{A_\mathbf{n}\}_{\mathbf{n}}$.
	
	The following is a useful criterion to identify an a.c.s. without constructing the splitting present in Definition \ref{def:ACS}.
	\begin{theorem}\label{thm:acs_caratt}
	Let $\{A_\mathbf{n}\}_{\mathbf{n}}$  be a sequence of matrices, with $A_\mathbf{n}$ of size $N(\mathbf{n})$, let $\{\{B_{\mathbf{n},m}\}_{\mathbf{n}}\}_m$ be a sequence of matrix-sequences, with $B_{\mathbf{n},m}$ of size $N(\mathbf{n})$. Suppose that for every $m$ there exists $\mathbf{n}_m$ such that, for $  \mathbf{n}\ge\mathbf{n}_m$
	\[\|A_\mathbf{n}- B_{\mathbf{n},m}\|_{1}\le \epsilon(m)N(\mathbf{n}),\]

where $\lim_{m\to \infty}  \epsilon(m)= 0$. Then,
\[\{B_{\mathbf{n},m}\}_{\mathbf{n}}\xrightarrow{\text{a.c.s.\ wrt\ $m$}}\{A_\mathbf{n}\}_{\mathbf{n}}.\]
	\end{theorem}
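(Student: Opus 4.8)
The plan is to recover the a.c.s.\ splitting of Definition~\ref{def:ACS} directly from the trace-norm bound by thresholding the singular values of the difference matrix. Fix $m$ and, for $\mathbf{n}\ge\mathbf{n}_m$, set $D_{\mathbf{n},m}:=A_\mathbf{n}-B_{\mathbf{n},m}$, so that by hypothesis $\|D_{\mathbf{n},m}\|_{1}\le\epsilon(m)N(\mathbf{n})$. I would take a singular value decomposition $D_{\mathbf{n},m}=U_{\mathbf{n},m}\Sigma_{\mathbf{n},m}V_{\mathbf{n},m}^{*}$, with $\Sigma_{\mathbf{n},m}=\mathrm{diag}(\sigma_1,\dots,\sigma_{N(\mathbf{n})})$ and $\sigma_1\ge\sigma_2\ge\dots\ge\sigma_{N(\mathbf{n})}\ge 0$ (the dependence of the $\sigma_j$ on $\mathbf{n}$ and $m$ being suppressed), fix the threshold $\tau(m):=\sqrt{\epsilon(m)}$, and split $\Sigma_{\mathbf{n},m}=\Sigma_{\mathbf{n},m}'+\Sigma_{\mathbf{n},m}''$, where $\Sigma_{\mathbf{n},m}'$ keeps the singular values strictly larger than $\tau(m)$ and annihilates the others, while $\Sigma_{\mathbf{n},m}''$ does the opposite. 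Setting
\[
R_{\mathbf{n},m}:=U_{\mathbf{n},m}\Sigma_{\mathbf{n},m}'V_{\mathbf{n},m}^{*},\qquad N_{\mathbf{n},m}:=U_{\mathbf{n},m}\Sigma_{\mathbf{n},m}''V_{\mathbf{n},m}^{*},
\]
one has $A_\mathbf{n}=B_{\mathbf{n},m}+R_{\mathbf{n},m}+N_{\mathbf{n},m}$ for every $\mathbf{n}\ge\mathbf{n}_m$, which is exactly the decomposition required.

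The next step is to check the two quantitative requirements. For the norm term, each nonzero diagonal entry of $\Sigma_{\mathbf{n},m}''$ is by construction at most $\tau(m)$, and since the spectral norm is unitarily invariant, $\|N_{\mathbf{n},m}\|=\|\Sigma_{\mathbf{n},m}''\|\le\tau(m)=\sqrt{\epsilon(m)}=:\omega(m)$. For the rank term, $\mathrm{rank}\,R_{\mathbf{n},m}$ equals the number $q$ of indices $j$ with $\sigma_j>\tau(m)$, and a Markov-type estimate gives $q\,\tau(m)<\sum_{j=1}^{q}\sigma_j\le\|D_{\mathbf{n},m}\|_{1}\le\epsilon(m)N(\mathbf{n})$, whence $\mathrm{rank}\,R_{\mathbf{n},m}=q\le\epsilon(m)N(\mathbf{n})/\tau(m)=\sqrt{\epsilon(m)}\,N(\mathbf{n})=:c(m)N(\mathbf{n})$. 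Since $\epsilon(m)\to 0$ as $m\to\infty$, both $c(m)=\omega(m)=\sqrt{\epsilon(m)}\to 0$; taking $n_m:=\mathbf{n}_m$, all the defining conditions of an a.c.s.\ hold, and the claim follows.

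I do not expect a genuine obstacle here: this is the standard "trace-norm controls a.c.s." lemma, and the only point requiring a little care is the choice of threshold. One must pick $\tau(m)$ so that \emph{simultaneously} $\tau(m)\to 0$ (to force $\omega(m)\to 0$) and $\epsilon(m)/\tau(m)\to 0$ (to force $c(m)\to 0$); any $\tau(m)$ with $\epsilon(m)\ll\tau(m)\ll 1$ works, and $\tau(m)=\sqrt{\epsilon(m)}$ is the symmetric choice. The multi-index nature of $\mathbf{n}$ plays no role, since the hypothesis already supplies, for each $m$, the index $\mathbf{n}_m$ beyond which the trace-norm estimate holds; no monotonicity or other structural property in $\mathbf{n}$ is needed, and the degenerate case $\epsilon(m)=0$ is trivial ($D_{\mathbf{n},m}=0$, take $R_{\mathbf{n},m}=N_{\mathbf{n},m}=0$).
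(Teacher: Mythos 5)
Your proof is correct and is exactly the standard argument for this criterion (the paper itself states Theorem~\ref{thm:acs_caratt} without proof, as a known tool from the GLT literature): threshold the singular values of $A_\mathbf{n}-B_{\mathbf{n},m}$ at a level $\tau(m)$ with $\epsilon(m)\ll\tau(m)\ll 1$, put the large ones into a low-rank term via a Markov-type count and the small ones into a small-norm term. The choice $\tau(m)=\sqrt{\epsilon(m)}$ and the handling of the degenerate case $\epsilon(m)=0$ are both fine; no gaps.
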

\begin{lemma}{\rm \cite[Corollary 2.4]{SerraLibro2}}\label{lem:Corollary5.1}
		Let $\{A_\mathbf{n}\}_{\mathbf{n}}, \{B_{\mathbf{n},m}\}_{\mathbf{n}}$ be matrix-sequences and let $f,f_m:D \subset \mathbb{R}^k \to \mathbb{C}$ be measurable functions defined on a set $D$ with $0<\mu_k(D)<\infty$. Suppose that
		
		\begin{enumerate}
			\item $\{B_{\mathbf{n},m}\}_{\mathbf{n}} \sim_{\sigma}  f_m$ for every $m$,
			\item $\{B_{\mathbf{n},m}\}_{\mathbf{n}}\xrightarrow{\text{a.c.s.\ wrt\ $m$}}\{A_\mathbf{n}\}_{\mathbf{n}}$,
			\item $f_m \to f$ in measure.
		\end{enumerate}
		
		Then 
		\[
		\{A_\mathbf{n}\}_{\mathbf{n}} \sim_{\sigma}  f.
		\]
		
		Moreover, if the first assumption is replaced by $\{B_{\mathbf{n},m}\}_{\mathbf{n}}  \sim_{\lambda}  f_m$ for every $m$, given that the other two assumptions are left unchanged, and all the involved matrices are Hermitian, then  $\{A_\mathbf{n}\}_{\mathbf{n}} \sim_{\lambda}  f$.
	\end{lemma}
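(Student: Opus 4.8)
The plan is to reduce the statement to the defining limit and then to run an $\varepsilon/3$ argument, proving the singular-value part in detail and indicating the (essentially verbatim) modifications for the eigenvalue part. Fix a test function $F\in C_c(\reals)$. For every $m$, the triangle inequality gives
\begin{align*}
\left|\frac{1}{N(\mathbf{n})}\sum_{j=1}^{N(\mathbf{n})}F(\sigma_j(A_\mathbf{n}))-\eta_{|f|}(F)\right|
\le{}&\left|\frac{1}{N(\mathbf{n})}\sum_{j=1}^{N(\mathbf{n})}\Bigl(F(\sigma_j(A_\mathbf{n}))-F(\sigma_j(B_{\mathbf{n},m}))\Bigr)\right|\\
&+\left|\frac{1}{N(\mathbf{n})}\sum_{j=1}^{N(\mathbf{n})}F(\sigma_j(B_{\mathbf{n},m}))-\eta_{|f_m|}(F)\right|
+\bigl|\eta_{|f_m|}(F)-\eta_{|f|}(F)\bigr|,
\end{align*}
so it suffices to make the three terms on the right small, choosing $m$ large first and $\mathbf{n}$ large afterwards.

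The core estimate is the first term. By assumption (2) and Definition \ref{def:ACS}, for $\mathbf{n}$ large we may write $A_\mathbf{n}=B_{\mathbf{n},m}+R_{\mathbf{n},m}+N_{\mathbf{n},m}$ with $\text{rank}\,R_{\mathbf{n},m}\le c(m)N(\mathbf{n})$ and $\|N_{\mathbf{n},m}\|\le\omega(m)$. The passage from $B_{\mathbf{n},m}$ to $B_{\mathbf{n},m}+N_{\mathbf{n},m}$ perturbs each singular value by at most $\|N_{\mathbf{n},m}\|\le\omega(m)$ (Weyl's inequality for singular values), hence, by uniform continuity of $F$, it changes the average by at most $\omega_F(\omega(m))$, where $\omega_F$ is the modulus of continuity of $F$. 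The passage from $B_{\mathbf{n},m}+N_{\mathbf{n},m}$ to $A_\mathbf{n}$ adds $R_{\mathbf{n},m}$, and Weyl's inequality shows that adding a matrix of rank $\le r$ shifts each ordered singular value by at most $r$ positions; since $F$ is bounded, this contributes at most a fixed multiple of $\|F\|_\infty\,c(m)$ to the average. Hence the first term is bounded, uniformly in the admissible $\mathbf{n}$, by $\omega_F(\omega(m))+C\|F\|_\infty\,c(m)$, which tends to $0$ as $m\to\infty$ because $c(m),\omega(m)\to0$.

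The third term is controlled by convergence in measure: since $f_m\to f$ in measure on $D$, also $|f_m|\to|f|$ in measure, hence $F(|f_m|)\to F(|f|)$ in measure by continuity of $F$; since $\mu_k(D)<\infty$ and $|F|\le\|F\|_\infty$, dominated convergence yields $\eta_{|f_m|}(F)\to\eta_{|f|}(F)$. The conclusion then follows with the correct order of quantifiers: given $\varepsilon>0$, pick $m$ so large that the first term (for all sufficiently large $\mathbf{n}$) and the third term are each $<\varepsilon/3$; then, with this $m$ fixed, assumption (1) makes the second term $<\varepsilon/3$ for $\mathbf{n}$ large. This gives $\{A_\mathbf{n}\}_{\mathbf{n}}\sim_\sigma f$. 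For the eigenvalue statement one repeats everything with $F\in C_c(\complex)$, $\sigma_j$ replaced by $\lambda_j$, and $|f|,|f_m|$ by $f,f_m$; here Hermitianity is essential — it lets one replace $R_{\mathbf{n},m},N_{\mathbf{n},m}$ by their Hermitian parts (at worst doubling the rank bound, and since $A_\mathbf{n}-B_{\mathbf{n},m}$ is Hermitian the splitting is preserved) and guarantees that a Hermitian perturbation of norm $\le\omega(m)$ moves each eigenvalue by at most $\omega(m)$ and that a Hermitian rank-$\le r$ perturbation shifts each ordered eigenvalue by at most $r$ positions, so the same bounds apply. I expect the main obstacle to be precisely this first term: one needs a single estimate that simultaneously absorbs the low-rank piece $R_{\mathbf{n},m}$ (small rank but possibly large in norm) and the small-norm piece $N_{\mathbf{n},m}$ (small norm but possibly of full rank), which forces a separate treatment — the former through boundedness of $F$ together with the interlacing of singular values (resp.\ eigenvalues) under low-rank perturbations, the latter through uniform continuity of $F$ together with Weyl's inequality.
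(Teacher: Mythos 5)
The paper does not actually prove this lemma --- it is quoted from \cite[Corollary 2.4]{SerraLibro2} --- so there is no internal proof to compare with; your $\varepsilon/3$ architecture is indeed the standard route to this result, and the treatment of the small-norm piece, of the third term, and of the order of quantifiers is correct. However, there is a genuine gap exactly at the point you identify as the main obstacle: the bound you assert for the low-rank piece is false. You claim that if $\mathrm{rank}(A-A')\le r$ then $\bigl|\sum_j F(\sigma_j(A))-\sum_j F(\sigma_j(A'))\bigr|\le C\|F\|_\infty\, r$ because the ordered singular values are ``shifted by at most $r$ positions''. The interlacing inequality $\sigma_{j+r}(A)\le\sigma_j(A')\le\sigma_{j-r}(A)$ only confines $\sigma_j(A')$ to an interval that may be wide, and a rank-one perturbation can move \emph{every} singular value by a macroscopic amount. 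Concretely, let $F\in C_c(\mathbb{R})$ agree with $x\sin(1/x)$ on $(0,1]$, and let $A$ be the $N\times N$ positive diagonal matrix whose entries are the odd-indexed local extrema of $F$ near $0$; by the standard inverse interlacing result there is a rank-one Hermitian $R$ such that $A+R$ has as eigenvalues the even-indexed extrema. Since consecutive extrema of $F$ have opposite signs with magnitudes of order $1/k$, one gets $\bigl|\sum_jF(\sigma_j(A+R))-\sum_jF(\sigma_j(A))\bigr|\asymp\log N$, which is not $O(\|F\|_\infty\cdot\mathrm{rank}\,R)$. What the interlacing actually yields is that the counting functions $t\mapsto\#\{j:\sigma_j>t\}$ of the two matrices differ by at most $r$ everywhere, whence a bound $r\cdot V(F)$ with $V(F)$ the total variation of $F$ --- useless when $F\in C_c(\mathbb{R})$ has unbounded variation.

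The standard repair is a preliminary density reduction that your write-up omits: since both $\mathbf{n}\mapsto\frac{1}{N(\mathbf{n})}\sum_jF(\sigma_j(A_\mathbf{n}))$ and $F\mapsto\eta_{|f|}(F)$ are $1$-Lipschitz with respect to $\|F\|_\infty$, the set of test functions for which the defining limit holds is closed under uniform convergence, so it suffices to prove the relation for $F$ Lipschitz with compact support (dense in $C_c(\mathbb{R})$). For such $F$ one has $V(F)<\infty$, the rank term is bounded by $c(m)\,V(F)\,N(\mathbf{n})$, and the rest of your argument closes as written. The same reduction is needed in the Hermitian/eigenvalue case, where your handling of the Hermitian parts of $R_{\mathbf{n},m}$ and $N_{\mathbf{n},m}$ is otherwise fine.
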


	In \cite{FFHMS} a useful asymptotic spectral result is provided in the  unilevel setting for matrix-sequences $\{Y_nT_n[f]\}_n$, where $Y_n$ is the anti-identity matrix.
\begin{theorem}\label{thm:main_old1d}	\cite{FFHMS}
	Let $Y_n$ be the $n\times n$ anti-identity matrix  and $T_n[f]$ be the  $n\times n$ Toeplitz matrix generated by a univariate Lebesgue integrable function $f\in L^1([-\pi,\pi])$.
	 If $f$ has real Fourier coefficients, then the matrix-sequence $\{Y_nT_n[f]\}_n$ is distributed in  eigenvalue sense as the function
			\begin{equation}\label{def:psi_unilevel}
			\psi_{|f|}(\theta)=\left\{
			\begin{array}{cc}
			|f|(\theta), & \theta\in [0,2\pi], \\
			-|f|(\theta+2\pi), & \theta\in [-2\pi,0) 
			\end{array}
			\right. .
			\end{equation}
			\end{theorem}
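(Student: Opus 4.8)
The plan is to prove the statement first for trigonometric polynomials and then to pass to a general $f\in L^1([-\pi,\pi])$ by an approximation argument built on the notion of approximating class of sequences. The starting observation is that if $f$ has real Fourier coefficients then $T_n[f]$ is a real matrix and, since $Y_n$ is symmetric with $Y_n^2=I_n$ and $Y_nT_n[f]Y_n=T_n[f]^{\mathsf T}$, the matrix $Y_nT_n[f]$ is \emph{real symmetric}; in particular all the matrices entering the argument are Hermitian, and $\psi_{|f|}$ is a well-defined real measurable function on $[-2\pi,2\pi]$, a set of finite positive measure, so that the Hermitian version of Lemma \ref{lem:Corollary5.1} is applicable.

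For the reduction, I would let $p_m$ be the $m$-th Ces\`aro (Fej\'er) mean of $f$; then $p_m$ still has real Fourier coefficients and $p_m\to f$ in $L^1([-\pi,\pi])$. Using the standard trace-norm bound $\|T_n[g]\|_1\le\frac{n}{2\pi}\|g\|_{L^1}$ applied to $g=f-p_m$, together with the fact that left multiplication by the orthogonal matrix $Y_n$ leaves the trace norm unchanged, one obtains $\|Y_nT_n[f]-Y_nT_n[p_m]\|_1\le\frac{n}{2\pi}\|f-p_m\|_{L^1}$, so Theorem \ref{thm:acs_caratt} yields $\{Y_nT_n[p_m]\}_n\xrightarrow{\text{a.c.s.\ wrt\ $m$}}\{Y_nT_n[f]\}_n$. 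Since $|p_m|\to|f|$ in $L^1$, hence in measure, the piecewise definition of $\psi$ gives $\psi_{|p_m|}\to\psi_{|f|}$ in measure. Granting the polynomial case (i.e.\ $\{Y_nT_n[p_m]\}_n\sim_\lambda\psi_{|p_m|}$ for each $m$), Lemma \ref{lem:Corollary5.1} then delivers $\{Y_nT_n[f]\}_n\sim_\lambda\psi_{|f|}$, which is the assertion.

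It remains to treat a trigonometric polynomial $p$ of degree $N$ with real Fourier coefficients. Writing $n=2\mu$ and partitioning $Y_nT_n[p]$ into $\mu\times\mu$ blocks, direct inspection of the entries $(Y_nT_n[p])_{l,h}=\hat p_{\,2\mu+1-l-h}$ shows that the two diagonal blocks are supported on an $N\times N$ corner only, so their direct sum $R_n$ has rank at most $2N$, \emph{uniformly in} $\mu$, while the off-diagonal block $B_\mu$ satisfies $B_\mu Y_\mu=T_\mu[p]^{\mathsf T}$ and hence has the same singular values as $T_\mu[p]$. Consequently $Y_nT_n[p]=E_n+R_n$, where $E_n$ is orthogonally similar to the block-antidiagonal matrix with off-diagonal blocks equal to $\mathrm{diag}(\sigma_1(T_\mu[p]),\dots,\sigma_\mu(T_\mu[p]))$, so that the eigenvalues of $E_n$ are exactly $\pm\sigma_j(T_\mu[p])$, $j=1,\dots,\mu$. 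A perturbation of uniformly bounded rank does not affect the spectral distribution (apply Lemma \ref{lem:Corollary5.1} with the approximating class constant in $m$), hence for every $F\in C_c(\mathbb{R})$
\[
\begin{aligned}
\lim_{n\to\infty}\frac{1}{n}\sum_{j=1}^{n}F\big(\lambda_j(Y_nT_n[p])\big)
&=\lim_{\mu\to\infty}\frac{1}{2\mu}\sum_{j=1}^{\mu}\Big(F\big(\sigma_j(T_\mu[p])\big)+F\big(-\sigma_j(T_\mu[p])\big)\Big)\\
&=\frac{1}{2}\cdot\frac{1}{2\pi}\int_{-\pi}^{\pi}\big(F(|p|(\theta))+F(-|p|(\theta))\big)\,d\theta,
\end{aligned}
\]
where the last equality uses the Avram--Parter part of Theorem \ref{szego} applied to $\{T_\mu[p]\}_\mu$. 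A change of variables (exploiting $2\pi$-periodicity of $|p|$ and the substitution $\theta\mapsto\theta+2\pi$ on $[-2\pi,0)$) rewrites the right-hand side as $\frac{1}{4\pi}\int_{-2\pi}^{2\pi}F(\psi_{|p|}(\theta))\,d\theta=\eta_{\psi_{|p|}}(F)$, which is the polynomial case. The case of odd $n=2\mu+1$ is identical after a $(\mu,1,\mu)$ block partition, which only adds a rank-one term to $R_n$ and one spurious zero eigenvalue to $E_n$, neither of which changes the limit; since the even- and odd-indexed subsequences share the same limit, this completes the argument.

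I expect the only genuinely delicate part to be the bookkeeping of the polynomial case: checking that the diagonal blocks are confined to a corner of size independent of $\mu$ (so that the rank of the perturbation stays bounded), verifying the index identity $B_\mu Y_\mu=T_\mu[p]^{\mathsf T}$, handling the parity of $n$, and carrying out the change of variables that produces precisely $\psi_{|p|}$. Everything else is a routine combination of the a.c.s.\ machinery (Theorem \ref{thm:acs_caratt} and Lemma \ref{lem:Corollary5.1}) with the classical Szeg\H{o}-type distribution result of Theorem \ref{szego}.
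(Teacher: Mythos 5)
Your proof is correct and follows essentially the same route the paper itself takes for the multilevel generalization (Theorems \ref{thm:main_pol} and \ref{thm:main_L1}, which the authors state mirror the unilevel proof in \cite{FFHMS}): split $Y_nT_n[p]$ into a block-antidiagonal part whose eigenvalues are $\pm\sigma_j(T_\mu[p])$ by Theorem \ref{Lemm:symm} plus low-rank Hankel corner blocks, invoke Theorem \ref{szego}, and pass from trigonometric polynomials to general $f\in L^1$ via Theorem \ref{thm:acs_caratt} and Lemma \ref{lem:Corollary5.1}. I see no gaps.
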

			
Our main goal is the generalization of the latter result for a $k$-variate $f\in L^1([-\pi,\pi]^k)$, $k>1$. First, in Theorem \ref{thm:singularvalue_multi}, we provide a precise description of the asymptotic singular value distribution of $\{Y_\mathbf{n}T_\mathbf{n}[f]\}_{\mathbf{n}} $, where the real Toeplitz matrix $T_\mathbf{n}[f]$ is generated by $f \in L^1([-\pi,\pi]^k)$. Then, in Section \ref{section:main} we exploit it to provide an elegant description of the asymptotic spectral distribution
of $\{Y_\mathbf{n}T_\mathbf{n}[f]\}_{\mathbf{n}} $.
	
	\begin{theorem}\label{thm:singularvalue_multi}
		Suppose $f \in L^1([-\pi,\pi]^k)$ with real Fourier coefficients and $Y_\mathbf{n} \in \mathbb{R}^{N(\mathbf{n}) \times N(\mathbf{n})}$ is the multilevel anti-identity matrix $Y_\mathbf{n}=Y_{n_1}\otimes \ldots \otimes Y_{n_k}=Y_{N(\mathbf{n})}$. Let $T_\mathbf{n}[f]\in \mathbb{R}^{N(\mathbf{n})\times N(\mathbf{n})}$ be the Toeplitz matrix generated by $f$. Then 
		\[
		\{Y_\mathbf{n}T_\mathbf{n}[f]\}_{\mathbf{n}}  \sim_{\sigma}  f.
		\] 
		\begin{proof}
Consider the (full) singular value decomposition of $T_\mathbf{n}[f]=U \Sigma V^*$, where $U,V$ are unitary matrices of size $N(\mathbf{n})$ and $\Sigma$ is the diagonal matrix containing the singular values $\sigma_1,\dots,\sigma_{N(\mathbf{n})}$ of $T_\mathbf{n}[f]$.
		 
We can write 
\[Y_{N(\mathbf{n})}T_{\mathbf{n}}[f]=\left(Y_{N(\mathbf{n})}U\right)\Sigma V^*.\]

Since $Y_{N(\mathbf{n})}$ is a unitary matrix, $Y_{N(\mathbf{n})}U$ is unitary and the previous formula is a singular value decomposition of $Y_{N(\mathbf{n})}T_{\mathbf{n}}[f]$.
Hence the sequence $\{Y_{N(\mathbf{n})}T_{\mathbf{n}}[f]\}_{\mathbf{n}}$ has the same singular value distribution of $\{T_{\mathbf{n}}[f]\}_{\mathbf{n}} $, which we know from Theorem \ref{szego}.
Consequently
\[\{Y_\mathbf{n}T_\mathbf{n}[f]\}_{\mathbf{n}}  \sim_{\sigma}  f,\]
and this completes the proof.
		 
		\end{proof}

	\end{theorem}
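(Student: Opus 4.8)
The plan is to exploit the fact that the anti-identity matrix $Y_{N(\mathbf{n})}$ is not merely orthogonal but, more specifically, that left-multiplication by a unitary matrix does not change the singular values of a matrix. First I would recall the full singular value decomposition $T_{\mathbf{n}}[f]=U\Sigma V^{*}$ with $U,V$ unitary of size $N(\mathbf{n})$ and $\Sigma$ diagonal collecting $\sigma_1(T_{\mathbf{n}}[f]),\dots,\sigma_{N(\mathbf{n})}(T_{\mathbf{n}}[f])$. Writing $Y_{N(\mathbf{n})}T_{\mathbf{n}}[f]=(Y_{N(\mathbf{n})}U)\Sigma V^{*}$, the key observation is that $Y_{N(\mathbf{n})}$ is a permutation matrix hence unitary (indeed $Y_{N(\mathbf{n})}^{*}Y_{N(\mathbf{n})}=I$), so $Y_{N(\mathbf{n})}U$ is still unitary and the displayed factorization is again a genuine singular value decomposition. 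Therefore $\sigma_j\big(Y_{N(\mathbf{n})}T_{\mathbf{n}}[f]\big)=\sigma_j\big(T_{\mathbf{n}}[f]\big)$ for all $j$ and all $\mathbf{n}$.

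Once the singular values of the two matrix-sequences coincide term by term, the averaged sums $\frac{1}{N(\mathbf{n})}\sum_{j=1}^{N(\mathbf{n})}F(\sigma_j(\,\cdot\,))$ in Definition \ref{def:spectral_distribution} are literally identical for every test function $F\in C_c(\mathbb{R})$, so $\{Y_{\mathbf{n}}T_{\mathbf{n}}[f]\}_{\mathbf{n}}$ and $\{T_{\mathbf{n}}[f]\}_{\mathbf{n}}$ share the same asymptotic singular value distribution. The last step is to invoke Theorem \ref{szego}, which gives $\{T_{\mathbf{n}}[f]\}_{\mathbf{n}}\sim_{\sigma}f$ for any $f\in L^{1}([-\pi,\pi]^k)$; transporting this along the equality of singular values yields $\{Y_{\mathbf{n}}T_{\mathbf{n}}[f]\}_{\mathbf{n}}\sim_{\sigma}f$.

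There is essentially no obstacle here: the only thing to be a little careful about is the tensor identity $Y_{n_1}\otimes\cdots\otimes Y_{n_k}=Y_{N(\mathbf{n})}$, which is what lets us treat the multilevel case exactly as the unilevel one — a Kronecker product of anti-identity matrices is again the anti-identity of the product size, and a Kronecker product of unitary matrices is unitary. Neither the hypothesis that $f$ has real Fourier coefficients nor the real-valuedness of $T_{\mathbf{n}}[f]$ is actually needed for the singular value statement (they become essential only for the spectral distribution treated later), so the argument is robust and short. The reality assumption could be dropped entirely in this theorem, but it is kept for consistency with the eigenvalue result that follows.
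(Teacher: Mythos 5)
Your argument is exactly the paper's proof: factor $T_{\mathbf{n}}[f]=U\Sigma V^{*}$, note that $Y_{N(\mathbf{n})}U$ is unitary so that $(Y_{N(\mathbf{n})}U)\Sigma V^{*}$ is again an SVD, hence the singular values are unchanged, and conclude via Theorem \ref{szego}. Your added remarks (that the identity $Y_{n_1}\otimes\cdots\otimes Y_{n_k}=Y_{N(\mathbf{n})}$ reduces the multilevel case to the unilevel one, and that the reality assumptions are not needed for the singular value statement) are correct but do not change the route.
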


\section{Main results}\label{section:main}
	
	\textcolor{black}{In this section, we provide the main results on the spectral distribution of $\{Y_\mathbf{n} T_\mathbf{n}[f]\}_{\mathbf{n}} $.}
	
	First we report a general tool useful for the latter purpose and we define the function that will have a crucial role in the description of the spectrum of $Y_\mathbf{n} T_\mathbf{n}[f]$.
	\begin{definition}\label{def:psi}
	Given the vector $\mathbf{p}=[2\pi,2\pi,\dots,2\pi]^T\in \mathbb{R}^k$ and a function $g$ defined over $[0,2\pi]^k$, we define 
$\psi_{g}$ over $[-2\pi,0]^k\cup [0,2\pi]^k$ in the following manner
	\begin{equation}\label{eq:psi}
\psi_g(\boldsymbol{\theta})=\left\{
	\begin{array}{cc}
	g(\boldsymbol{\theta}), & \boldsymbol{\theta}\in [0,2\pi]^k, \\
	-g(\boldsymbol{\theta}+\mathbf{p}), & \boldsymbol{\theta}\in  [-2\pi,0]^k, \ \boldsymbol{\theta}\neq \mathbf{0}
	\end{array}.
	\right.\,
	\end{equation}
	
	\end{definition}

	\begin{theorem}\label{Lemm:symm}
		Suppose $n \in \mathbb{Z}$ and $A(n)\in \mathbb{C}^{ n\times n}$. Let $B_n \in \mathbb{C}^{2n \times 2n}$ be \textcolor{black}{Hermitian matrices} such that
		\[
		B_n=\left[\begin{array}{cc}
		O  & A(n) \\
		A(n)^H & O\end{array}\right],
		\]
		with $O$ being the square null matrices of size $n$.		If $\sigma_1,\dots, \sigma_n$ are the singular values of $A(n)$, then the eigenvalue of $B_n$ are given by $\pm  \sigma_j$, $j=1,\dots, n$.
	\end{theorem}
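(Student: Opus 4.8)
The plan is to diagonalize $B_n$ explicitly by means of a singular value decomposition of $A(n)$. Write $A(n)=U\Sigma V^{H}$, with $U,V\in\mathbb{C}^{n\times n}$ unitary and $\Sigma=\mathrm{diag}(\sigma_1,\dots,\sigma_n)$. Since $\Sigma$ is a real diagonal matrix, $A(n)^{H}=V\Sigma U^{H}$, and a direct block computation yields
\[
B_n=\begin{bmatrix} U & O \\ O & V\end{bmatrix}
\begin{bmatrix} O & \Sigma \\ \Sigma & O\end{bmatrix}
\begin{bmatrix} U^{H} & O \\ O & V^{H}\end{bmatrix}.
\]
The outer factor $\mathrm{diag}(U,V)$ is unitary, hence $B_n$ is unitarily similar to $\left[\begin{smallmatrix} O & \Sigma \\ \Sigma & O\end{smallmatrix}\right]$ and the two matrices share the same spectrum, multiplicities included.

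It then remains to compute the eigenvalues of $\left[\begin{smallmatrix} O & \Sigma \\ \Sigma & O\end{smallmatrix}\right]$. I would conjugate by the permutation matrix that pairs row and column $j$ with row and column $n+j$ for $j=1,\dots,n$; this is again a similarity transformation, so it leaves the spectrum unchanged, and it brings the matrix into the block-diagonal form $\bigoplus_{j=1}^{n}\left[\begin{smallmatrix} 0 & \sigma_j \\ \sigma_j & 0\end{smallmatrix}\right]$. Each $2\times 2$ block has characteristic polynomial $\lambda^{2}-\sigma_j^{2}$, hence eigenvalues $\pm\sigma_j$; collecting the $n$ blocks gives exactly the multiset $\{\pm\sigma_j:\ j=1,\dots,n\}$, which is the assertion.

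An equivalent route that avoids the bookkeeping with permutations is to exhibit the eigenpairs directly. Denoting by $u_j,v_j$ the columns of $U,V$, the SVD relations read $A(n)v_j=\sigma_j u_j$ and $A(n)^{H}u_j=\sigma_j v_j$, whence $B_n\left[\begin{smallmatrix} u_j \\ v_j\end{smallmatrix}\right]=\sigma_j\left[\begin{smallmatrix} u_j \\ v_j\end{smallmatrix}\right]$ and $B_n\left[\begin{smallmatrix} u_j \\ -v_j\end{smallmatrix}\right]=-\sigma_j\left[\begin{smallmatrix} u_j \\ -v_j\end{smallmatrix}\right]$. Since $\{u_j\}_{j=1}^{n}$ and $\{v_j\}_{j=1}^{n}$ are orthonormal sets, the $2n$ vectors $\left[\begin{smallmatrix} u_j \\ \pm v_j\end{smallmatrix}\right]$ are pairwise orthogonal and nonzero, hence form a basis of $\mathbb{C}^{2n}$; therefore the list $\pm\sigma_j$, $j=1,\dots,n$, exhausts the spectrum of $B_n$ with the correct multiplicities.

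There is essentially no hard step here; the only subtlety worth keeping in mind is that all $2n$ eigenvalues be accounted for — in particular when $A(n)$ is singular (some $\sigma_j=0$, so the pair $\pm\sigma_j$ collapses to a double zero eigenvalue) or has repeated singular values — which is exactly why I would favour the second route, where the explicit orthogonality of the $2n$ candidate eigenvectors settles the multiplicity issue at once. Note finally that $B_n$ is Hermitian by hypothesis, so its eigenvalues are automatically real, consistent with the values $\pm\sigma_j$.
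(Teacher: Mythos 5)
The paper does not actually prove this statement: it is quoted as a known tool (it is the classical fact about the spectrum of the Jordan--Wielandt matrix $\left[\begin{smallmatrix} O & A \\ A^{H} & O\end{smallmatrix}\right]$), so there is no in-paper argument to compare yours against. Your proof is correct and complete. The block factorization $B_n=\mathrm{diag}(U,V)\left[\begin{smallmatrix} O & \Sigma \\ \Sigma & O\end{smallmatrix}\right]\mathrm{diag}(U,V)^{H}$ checks out (using $\Sigma^{H}=\Sigma$), the permutation similarity to $\bigoplus_{j}\left[\begin{smallmatrix} 0 & \sigma_j \\ \sigma_j & 0\end{smallmatrix}\right]$ is the standard way to finish, and your second route via the explicit eigenpairs $B_n\left[\begin{smallmatrix} u_j \\ \pm v_j\end{smallmatrix}\right]=\pm\sigma_j\left[\begin{smallmatrix} u_j \\ \pm v_j\end{smallmatrix}\right]$, together with the observation that these $2n$ vectors are pairwise orthogonal, correctly accounts for all multiplicities, including the degenerate cases of zero or repeated singular values. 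Your closing remark that the Hermitian hypothesis on $B_n$ is automatic from the block form (rather than an extra assumption) is also a fair reading of the statement. No gaps.
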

	
In the following we show that the spectral distribution of  $\{Y_\mathbf{n}  T_\mathbf{n}[f]\}_{\mathbf{n}} $ is described by $\psi_{|f|}(\boldsymbol{\theta})$ over the domain $[-2\pi,0]^k\cup [0,2\pi]^k$. In particular, in Theorem \ref{thm:main_pol} we prove that this holds for a trigonometric polynomial and in Theorem \ref{thm:main_L1} we extend the result to a generic $f\in L^1([-\pi,\pi]^k)$.
	\begin{theorem}\label{thm:main_pol}
		Suppose that $f$ is a $k$-variate trigonometric polynomial of degree $\mathbf{r}=(r_1,r_2,\dots,r_k)$ with real Fourier coefficients. Let $Y_\mathbf{n} \in \mathbb{R}^{N(\mathbf{n}) \times N(\mathbf{n})}$ be the anti-identity matrix $Y_\mathbf{n}=Y_{n_1}\otimes \ldots \otimes Y_{n_k}=Y_{N(\mathbf{n})}$ and let $T_\mathbf{n}[f]\in \mathbb{R}^{N(\mathbf{n}) \times N(\mathbf{n})}$ be the Toeplitz matrix generated by $f$. Then,
		\[
		\{Y_\mathbf{n}T_\mathbf{n}[f]\}_{\mathbf{n}}  \sim_{\lambda}  \psi_{|f|}
		\]
		over the domain $[-2\pi,0]^k\cup [0,2\pi]^k$, where $\psi_{|f|}$ is given as in Definition \ref{def:psi}.
	\end{theorem}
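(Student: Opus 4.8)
The plan is to relate the eigenvalues of $Y_{\mathbf n}T_{\mathbf n}[f]$ to the singular values of a companion Toeplitz matrix via Theorem~\ref{Lemm:symm}, and then to identify the asymptotic symbol by a direct computation on a polynomial of fixed degree. First I would observe that, since $f$ is a trigonometric polynomial with real Fourier coefficients, $T_{\mathbf n}[f]$ is a real (generally non-symmetric) banded matrix, and $Y_{\mathbf n}T_{\mathbf n}[f]$ is a real matrix whose eigenvalues we want to locate. The key algebraic fact is that $Y_{\mathbf n}$ acting on a Toeplitz structure performs the index flip $j_t \mapsto -j_t$ combined with a transposition, so that $Y_{\mathbf n}T_{\mathbf n}[f]$ is \emph{persymmetric}; equivalently $Y_{\mathbf n}T_{\mathbf n}[f] = (Y_{\mathbf n}T_{\mathbf n}[f])^T$ when the Fourier coefficients are real, because $Y_{\mathbf n}T_{\mathbf n}[f]Y_{\mathbf n} = T_{\mathbf n}[f]^T$ and $T_{\mathbf n}[f]^T = T_{\mathbf n}[\bar f(-\cdot)]$ which coincides with a Toeplitz matrix having the same real coefficients read backwards. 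So $Y_{\mathbf n}T_{\mathbf n}[f]$ is a real symmetric matrix and its eigenvalues are real.

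Next I would embed the problem into a Hermitian dilation to apply Theorem~\ref{Lemm:symm}. One natural route, following the unilevel argument in \cite{FFHMS}, is to split the index set of size $N(\mathbf n)$ according to the action of $Y_{\mathbf n}$ into a ``symmetric'' and an ``antisymmetric'' part, or alternatively to pass to the matrix $T_{\mathbf n}[f]$ itself: write $T_{\mathbf n}[f]=U\Sigma V^*$ and consider $B=\begin{bmatrix} O & T_{\mathbf n}[f] \\ T_{\mathbf n}[f]^H & O\end{bmatrix}$, whose eigenvalues are $\pm\sigma_j(T_{\mathbf n}[f])$ by Theorem~\ref{Lemm:symm}. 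The heart of the matter is to show that the $N(\mathbf n)$ eigenvalues of $Y_{\mathbf n}T_{\mathbf n}[f]$ are, up to $o(N(\mathbf n))$ many exceptions, obtained by taking one of each pair $\pm\sigma_j$, with the sign governed by the position in the ``block'' $[0,2\pi]^k$ versus $[-2\pi,0]^k$. Concretely, I expect the half-count of positive versus negative eigenvalues to be asymptotically balanced (each $\sim N(\mathbf n)/2$), which one can establish either by an inertia/Sylvester argument on the symmetric matrix $Y_{\mathbf n}T_{\mathbf n}[f]$, or by exhibiting $Y_{\mathbf n}T_{\mathbf n}[f]$ as a compression of the dilation $B$ up to low-rank corrections.

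To turn the pairing into a symbol statement I would then compute, for every fixed $F\in C_c(\mathbb{R})$,
\[
\frac{1}{N(\mathbf n)}\sum_{j=1}^{N(\mathbf n)}F(\lambda_j(Y_{\mathbf n}T_{\mathbf n}[f]))
= \frac{1}{N(\mathbf n)}\Bigl(\sum_{\lambda_j\ge 0}F(\lambda_j)+\sum_{\lambda_j<0}F(\lambda_j)\Bigr),
\]
and use the singular value distribution $\{T_{\mathbf n}[f]\}_{\mathbf n}\sim_\sigma |f|$ from Theorem~\ref{szego} together with the asymptotic balance to get that the first sum converges to $\tfrac12\cdot\tfrac{1}{(2\pi)^k}\int_{[-\pi,\pi]^k}F(|f(\boldsymbol\theta)|)\,d\boldsymbol\theta$ and the second to $\tfrac12\cdot\tfrac{1}{(2\pi)^k}\int_{[-\pi,\pi]^k}F(-|f(\boldsymbol\theta)|)\,d\boldsymbol\theta$. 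A change of variables rescaling $[-\pi,\pi]^k$ to $[0,2\pi]^k$ and to $[-2\pi,0]^k$ respectively shows these two terms are exactly $\tfrac{1}{2(2\pi)^k}\int_{[0,2\pi]^k}F(\psi_{|f|})+\tfrac{1}{2(2\pi)^k}\int_{[-2\pi,0]^k}F(\psi_{|f|})$, i.e. the mean of $F\circ\psi_{|f|}$ over the domain $[-2\pi,0]^k\cup[0,2\pi]^k$ of measure $2(2\pi)^k$. This is precisely $\{Y_{\mathbf n}T_{\mathbf n}[f]\}_{\mathbf n}\sim_\lambda \psi_{|f|}$.

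The main obstacle I anticipate is the sign-pairing step: proving that the eigenvalue multiset of $Y_{\mathbf n}T_{\mathbf n}[f]$ really is $\{\epsilon_j\sigma_j\}$ with a $50/50$ split of signs in the limit, rather than merely being contained in $[-\sigma_{\max},\sigma_{\max}]$. In the unilevel case this is handled by an explicit even/odd decomposition of $\mathbb{R}^n$ under $Y_n$ producing two Hankel-type blocks; in the multilevel case the tensor structure $Y_{\mathbf n}=Y_{n_1}\otimes\cdots\otimes Y_{n_k}$ should let me iterate this, but bookkeeping the parity across all $k$ levels and controlling the $O(\text{lower-dimensional})$ boundary corrections is the delicate part. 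I would manage this by writing $N(\mathbf n)=\prod n_t$, splitting each factor space into the $\pm1$ eigenspaces of $Y_{n_t}$ (dimensions $\lceil n_t/2\rceil$ and $\lfloor n_t/2\rfloor$), taking tensor products to block-diagonalize, and noting that the off-diagonal coupling between $+$ and $-$ tensor-eigenspaces has rank bounded by a constant times $N(\mathbf n)/\min_t n_t = o(N(\mathbf n))$, so that up to a vanishing fraction of eigenvalues the matrix splits into an honest $\begin{bmatrix}O & A\\ A^H & O\end{bmatrix}$ form to which Theorem~\ref{Lemm:symm} applies; an a.c.s.\ argument (Theorem~\ref{thm:acs_caratt}, Lemma~\ref{lem:Corollary5.1}) then absorbs the corrections and yields the spectral distribution claim.
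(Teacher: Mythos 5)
You correctly identify the crux --- realizing $Y_{\mathbf n}T_{\mathbf n}[f]$, up to negligible corrections, as an anti-diagonal $2\times 2$ block matrix to which Theorem~\ref{Lemm:symm} applies --- but the proposal does not close it, and the two mechanisms you sketch are both problematic. First, the dilation $B=\bigl[\begin{smallmatrix} O & T_{\mathbf n}[f]\\ T_{\mathbf n}[f]^H & O\end{smallmatrix}\bigr]$ has size $2N(\mathbf n)$ and is not related to the size-$N(\mathbf n)$ matrix $Y_{\mathbf n}T_{\mathbf n}[f]$ by any low-rank or small-norm perturbation. The correct object (used in the paper) is $M_{\mathbf n}=\bigl[\begin{smallmatrix} O & Y_{\tilde{\mathbf n}}T_{\tilde{\mathbf n}}[f]\\ Y_{\tilde{\mathbf n}}T_{\tilde{\mathbf n}}[f] & O\end{smallmatrix}\bigr]$ with $\tilde{\mathbf n}=(n_1/2,n_2,\dots,n_k)$, obtained by halving the index range of the \emph{first level only}: then $Y_{\mathbf n}T_{\mathbf n}[f]-M_{\mathbf n}$ is block-diagonal with two multilevel Hankel blocks whose rank is at most $r_1 n_2\cdots n_k=o(N(\mathbf n))$ precisely \emph{because $f$ has fixed degree} $\mathbf r$ --- this is the one place where the polynomial hypothesis enters, and your sketch never invokes it. Second, your alternative route via the $\pm1$ eigenspaces of each $Y_{n_t}$ is internally inconsistent: if the coupling between the $+$ and $-$ tensor-eigenspaces were low-rank, discarding it would leave a block-\emph{diagonal} matrix, to which Theorem~\ref{Lemm:symm} does not apply; and in that basis the diagonal blocks are compressions of type $\Pi_{\pm}T_{\mathbf n}[f]\Pi_{\pm}$, which are not low-rank in general.

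Separately, the ``$50/50$ sign balance'' you propose to establish by an inertia argument is not strong enough to yield the symbol $\psi_{|f|}$. Knowing that the singular values of the symmetric matrix $Y_{\mathbf n}T_{\mathbf n}[f]$ are distributed as $|f|$ and that asymptotically half of its eigenvalues are positive does not determine the eigenvalue distribution: all large singular values could carry the $+$ sign and all small ones the $-$ sign, which would change the limits of your two partial sums. What is actually needed (and what $M_{\mathbf n}$ delivers via Theorem~\ref{Lemm:symm}) is that \emph{each} singular value of the half-sized matrix $T_{\tilde{\mathbf n}}[f]$ occurs with \emph{both} signs; combined with $\{T_{\tilde{\mathbf n}}[f]\}_{\tilde{\mathbf n}}\sim_\sigma f$ this yields $\{M_{\mathbf n}\}_{\mathbf n}\sim_\lambda\psi_{|f|}$ directly, after which the a.c.s.\ machinery (Lemma~\ref{lem:Corollary5.1}) absorbs the low-rank Hankel correction exactly as you intend in your final step.
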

	
	\begin{proof}

		
First, we assume that we are in the case $\mathbf{n}=(n_1,n_2,\dots, n_k)$ with even $n_1=2\nu_1$.
		The trigonometric polynomial $f$ can be written in terms of its Fourier coefficients as
		\begin{equation*}
f(\boldsymbol{\theta})=\sum_{\mathbf{j}=-\mathbf{r}}^{\mathbf{r}}\hat{f}_{\mathbf{j}}\E^{\iota\left\langle {\mathbf{j}},\boldsymbol{\theta}\right\rangle},		
		\end{equation*}
where $\boldsymbol{\theta}=(\theta_1,\ldots,\theta_k)$, $\left\langle \mathbf{j},\boldsymbol{\theta}\right\rangle=\sum_{i=1}^kj_i\theta_i$.
		
		Consider the following $2\times 2$ block matrix-sequence  
		\begin{equation*}
	\{M_{\mathbf{n}}\}_{\mathbf{n}}=	\left\{
		\left[\begin{array}{cc}
		O & Y_{\mathbf{\tilde{n}}}T_{\mathbf{\tilde{n}}}[f] \\
		 Y_{\mathbf{\tilde{n}}}T_{\mathbf{\tilde{n}}}[f] & O\end{array}\right]
		\right\}_{\mathbf{n}},
		\end{equation*}	
		with blocks of dimension  $ N({\mathbf{\tilde{n}}})= \nu_1n_2 \dots n_k$, $ {\mathbf{\tilde{n}}}= (\nu_1,n_2, \dots, n_k)$.
		
	 Due to its particular structure, we can easily obtain the asymptotic eigenvalue distribution of  $\{M_{\mathbf{n}}\}_{{n}}$.
		Indeed Theorem \ref{Lemm:symm} implies that the eigenvalues of $M_{\mathbf{n}}$ are $\pm \sigma_j(Y_{\mathbf{\tilde{n}}}T_{\mathbf{\tilde{n}}}[f])=\pm \sigma_j( T_{\mathbf{\tilde{n}}}[f])$,
		$j=1,\ldots, N({{\mathbf{\tilde{n}}}})$. 
		
Consequently, we can conclude that 
		\begin{equation}\label{eq:distr_eig_Tn}
		\{M_\mathbf{n}\}_{\mathbf{n}} \sim_{\lambda}  \psi_{|f|}.
		\end{equation}
Concerning the matrix $Y_\mathbf{n}T_\mathbf{n}[f]$, we note that it can be written as a $2\times2$  block matrix with block of size $ N({\mathbf{\tilde{n}}})\times N({\mathbf{\tilde{n}}}) $. Indeed,
	\begin{equation}\label{eq:dec_hank_tep}
	\begin{split}
		Y_\mathbf{n}T_\mathbf{n}[f] &= \left[\begin{array}{cc}
		H^{(1)}_{\mathbf{\tilde{n}}} & Y_{\mathbf{\tilde{n}}}T_{\mathbf{\tilde{n}}}[f] \\
		 Y_{\mathbf{\tilde{n}}}T_{\mathbf{\tilde{n}}}[f] & H^{(2)}_{\mathbf{\tilde{n}}}\end{array}\right]= M_{\mathbf{n}}+\left[\begin{array}{cc}
		H^{(1)}_{\mathbf{\tilde{n}}} & O \\
		O & H^{(2)}_{\mathbf{\tilde{n}}}\end{array}\right],
	\end{split}
	\end{equation}
	where $H^{(1)}_{\mathbf{\tilde{n}}} $ and $H^{(2)}_{\mathbf{\tilde{n}}} $ are particular multilevel Hankel matrices of size $ N({\mathbf{\tilde{n}}})\times N({\mathbf{\tilde{n}}}) $.
	
	To conclude the proof we want to show that $\{Y_{\mathbf{n}}T_{\mathbf{n}}[f]\}_{\mathbf{n}}$,  $\{M_\mathbf{n}\}_{\mathbf{n}}$ and $\psi_{|f|}$ satisfy the hypotheses of Lemma \ref{lem:Corollary5.1}, 
	with $f_m\equiv \psi_{|f|}$, $\{A_{\mathbf{n}}\}_{\mathbf{n}}\equiv\{M_{\mathbf{n}}\}_{\mathbf{n}}$ and 
	$\{B_{\mathbf{n},m}\}_{\mathbf{n}}\equiv\{Y_{\mathbf{n}}T_{\mathbf{n}}[f]\}_{\mathbf{n}}$. That is, $\{Y_{\mathbf{n}}T_{\mathbf{n}}[f]\}_{\mathbf{n}}$ 
is a constant class of sequences, that is not depending on the variable $m$.
In particular, it is sufficient to verify that  
\[\{Y_{\mathbf{n}}T_{\mathbf{n}}[f]\}_{\mathbf{n}}\xrightarrow{\text{a.c.s.\ wrt\ $m$}}\{M_\mathbf{n}\}_{\mathbf{n}},\]
since we already have relation (\ref{eq:distr_eig_Tn}) and obviously $\{\psi_{|f|}\}_m\to \psi_{|f|}$ in measure.

Consequently, according to the Definition \ref{def:ACS}	it is sufficient to prove that the matrix-sequence
	\begin{equation*}
\{	E_{\mathbf{n}}\}_{\mathbf{n}}= \left\{\left[\begin{array}{cc}
		H^{(1)}_{\mathbf{\tilde{n}}} & O \\
		O & H^{(2)}_{\mathbf{\tilde{n}}}\end{array}\right]\right\}_{\mathbf{n}}
	\end{equation*}
%
is such that for $\mathbf{n}\ge \mathbf{n}_m $
	\[
		\text{rank}~E_{\mathbf{n}}\leq c(m)N(\mathbf{n}),
		\]
where $c(m)$ depends only on $m$ and $\lim_{m\to\infty}c(m)=0.$

		The fact that the  $f$ is a trigonometric polynomial of degree $\mathbf{r}$ implies that
		
\begin{equation*}
{\rm rank}\,H^{(i)}_{\mathbf{\tilde{n}}}\le r_1n_2n_3\dots n_k, \quad i=1,2.
\end{equation*}
Then, for $\mathbf{n}\ge \mathbf{n}_m$,
\begin{equation*}
{\rm rank}(E_{\mathbf{n}})\le 2 r_1n_2n_3\dots n_k = \frac{2r_1m}{m}  n_2n_3\dots n_k.
\end{equation*}
and so, for $\mathbf{n}$ such that $n_1\ge 2r_1m$, we obtain 		
\begin{equation*}
{\rm rank}(E_{\mathbf{n}})\le \frac{1}{m}N(\mathbf{n}), \quad {\rm with }\, \lim_{m\to \infty}\frac{1}{m}=0.
\end{equation*}		
For the case where $n_1$ is odd, the proof is of the same type as before with a few
slight technical changes in the decomposition in (\ref{eq:dec_hank_tep}). Indeed,  the result of Theorem \ref{Lemm:symm} is maintained also for odd dimensions.  Then, is possible to follow an analogous proof  as it has been done for the unilevel case in \cite[Theorem 3.2]{FFHMS} 
\end{proof}

	\begin{theorem}\label{thm:main_L1}
	Suppose $f\in L^1([-\pi,\pi]^k)$ a $k$-variate function  with real Fourier coefficients,  periodically extended to the whole real plane. Let $Y_\mathbf{n} \in \mathbb{R}^{N(\mathbf{n}) \times N(\mathbf{n})}$ be the anti-identity matrix $Y_\mathbf{n}=Y_{n_1}\otimes \ldots \otimes Y_{n_k}=Y_{N(\mathbf{n})}$ and let $T_\mathbf{n}[f]\in \mathbb{R}^{N(\mathbf{n}) \times N(\mathbf{n})}$ be the Toeplitz matrix generated by $f$. Then,
		\[
		\{Y_\mathbf{n}T_\mathbf{n}[f]\}_{\mathbf{n}}  \sim_{\lambda}  \psi_{|f|}
		\]
		over the domain $[-2\pi,0]^k\cup [0,2\pi]^k$, where $\psi_{|f|}$ is given as in Definition \ref{def:psi}.
	\end{theorem}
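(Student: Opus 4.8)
The plan is to reduce the general $L^1$ case to the polynomial case of Theorem \ref{thm:main_pol} by a density-plus-a.c.s.\ argument, exactly in the spirit of how Theorem \ref{szego} extends from polynomials to $L^1$. First I would fix a sequence $f_m$ of $k$-variate trigonometric polynomials with real Fourier coefficients such that $\|f-f_m\|_{L^1}\to 0$ as $m\to\infty$; this is possible because the real trigonometric polynomials are dense in $L^1([-\pi,\pi]^k)$ and truncating/averaging the (real) Fourier coefficients of $f$ preserves realness. By Theorem \ref{thm:main_pol} applied to each $f_m$ we have $\{Y_\mathbf{n}T_\mathbf{n}[f_m]\}_{\mathbf{n}}\sim_\lambda \psi_{|f_m|}$ over $[-2\pi,0]^k\cup[0,2\pi]^k$, which supplies hypothesis~(1) of Lemma \ref{lem:Corollary5.1} in the eigenvalue form, with $A_\mathbf{n}=Y_\mathbf{n}T_\mathbf{n}[f]$ and $B_{\mathbf{n},m}=Y_\mathbf{n}T_\mathbf{n}[f_m]$. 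All the matrices involved are real, but they need to be Hermitian to invoke the eigenvalue part of Lemma \ref{lem:Corollary5.1}; here I would use that $Y_\mathbf{n}T_\mathbf{n}[g]$ is symmetric whenever $g$ has real Fourier coefficients, since $Y_\mathbf{n}T_\mathbf{n}[g]$ is a real Hankel-type matrix (its $(l,h)$ entry depends on the Fourier coefficient indexed by a function of $l+h$), so symmetry holds for $f$, each $f_m$, and the perturbation.

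The second ingredient is the a.c.s.\ relation $\{Y_\mathbf{n}T_\mathbf{n}[f_m]\}_{\mathbf{n}}\xrightarrow{\text{a.c.s.\ wrt\ }m}\{Y_\mathbf{n}T_\mathbf{n}[f]\}_{\mathbf{n}}$, which I would obtain from the trace-norm criterion of Theorem \ref{thm:acs_caratt}. Indeed $Y_\mathbf{n}T_\mathbf{n}[f]-Y_\mathbf{n}T_\mathbf{n}[f_m]=Y_\mathbf{n}T_\mathbf{n}[f-f_m]$ and $Y_\mathbf{n}$ is unitary, so $\|Y_\mathbf{n}T_\mathbf{n}[f]-Y_\mathbf{n}T_\mathbf{n}[f_m]\|_1=\|T_\mathbf{n}[f-f_m]\|_1$. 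The standard estimate $\|T_\mathbf{n}[g]\|_1\le N(\mathbf{n})\,\frac{1}{(2\pi)^k}\|g\|_{L^1}$ (see \cite{SerraLibro2}) then gives $\|Y_\mathbf{n}T_\mathbf{n}[f]-Y_\mathbf{n}T_\mathbf{n}[f_m]\|_1\le \epsilon(m)N(\mathbf{n})$ with $\epsilon(m)=\frac{1}{(2\pi)^k}\|f-f_m\|_{L^1}\to 0$, so Theorem \ref{thm:acs_caratt} applies and yields hypothesis~(2) of Lemma \ref{lem:Corollary5.1}.

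Finally I must verify hypothesis~(3): $\psi_{|f_m|}\to\psi_{|f|}$ in measure on $[-2\pi,0]^k\cup[0,2\pi]^k$. Since $\|\,|f_m|-|f|\,\|_{L^1}\le\|f_m-f\|_{L^1}\to 0$, we have $|f_m|\to|f|$ in $L^1$ on $[0,2\pi]^k$ (after the obvious translation identifying the period cell), hence in measure there; by the definition of $\psi_g$ as $g$ on $[0,2\pi]^k$ and $-g(\cdot+\mathbf{p})$ on $[-2\pi,0]^k$, convergence in measure on the two cells combines to convergence in measure of $\psi_{|f_m|}$ to $\psi_{|f|}$ on the union. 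With all three hypotheses of Lemma \ref{lem:Corollary5.1} in place and all matrices Hermitian (symmetric real), the eigenvalue conclusion gives $\{Y_\mathbf{n}T_\mathbf{n}[f]\}_{\mathbf{n}}\sim_\lambda\psi_{|f|}$, completing the proof. The only genuinely delicate point is the symmetry/Hermitianity of $Y_\mathbf{n}T_\mathbf{n}[g]$ for $g$ with real Fourier coefficients in the multilevel setting—this must be checked carefully via the tensor structure $Y_\mathbf{n}=Y_{n_1}\otimes\cdots\otimes Y_{n_k}$ and the fact that each $Y_{n_j}T_{n_j}[\cdot]$ acts as a Hankel operator—but once that is established the rest is a routine assembly of the cited results.
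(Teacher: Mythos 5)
Your proposal is correct and follows essentially the same route as the paper: approximate $f$ by trigonometric polynomials $f_m$ in $L^1$, apply Theorem \ref{thm:main_pol} to each $f_m$, establish the a.c.s.\ relation via the trace-norm criterion of Theorem \ref{thm:acs_caratt} with the bound $\|T_{\mathbf n}[f-f_m]\|_1\le N(\mathbf n)(2\pi)^{-k}\|f-f_m\|_{L^1}$, check $\psi_{|f_m|}\to\psi_{|f|}$ in measure using $\bigl||f|-|f_m|\bigr|\le|f-f_m|$, and conclude with Lemma \ref{lem:Corollary5.1}. Your explicit verification of the Hermitianity hypothesis (that $Y_{\mathbf n}T_{\mathbf n}[g]$ is real symmetric for $g$ with real Fourier coefficients, via the tensor/Hankel structure) is a detail the paper leaves implicit, but it does not change the argument.
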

	\begin{proof}
	Since the set of $k$-th variate trigonometric polynomials is dense in $L^1([-\pi,\pi]^k)$, there exists a sequence $\{f_m\}_m$ of trigonometric polynomials such that $\{f_m\}_m\to f$
in $L^1([-\pi,\pi]^k)$. 

From Lemma \ref{lem:Corollary5.1}, we obtain that $\{Y_\mathbf{n}T_\mathbf{n}[f]\}_{\mathbf{n}}  \sim_{\lambda}  \psi_{|f|}$ if the sequences $\{Y_\mathbf{n}T_\mathbf{n}[f]\}_{\mathbf{n}}  $, $\{Y_\mathbf{n}T_\mathbf{n}[f_m]\}_{\mathbf{n}}$ and the functions $ \psi_{|f|}$ and $ \psi_{|f_m|}$ satisfy the following:
\begin{itemize}
\item  $\{Y_\mathbf{n}T_\mathbf{n}[f_m]\}_{\mathbf{n}}\sim_{\lambda}  \psi_{|f_m|}$;
\item $\{Y_{\mathbf{n}}T_{\mathbf{n}}[f_m]\}_{\mathbf{n}}\xrightarrow{\text{a.c.s.\ wrt\ $m$}}\{Y_{\mathbf{n}}T_\mathbf{n}(f)\}_{\mathbf{n}};$
\item $ \{\psi_{|f_m|}\}_m\to \psi_{|f|}$ in measure.
\end{itemize} 
The first item is a consequence of Lemma \ref{thm:main_pol}, since each $f_m$ is a multivariate trigonometric polynomial.

For the second item we use the characterization Theorem \ref{thm:acs_caratt}.
In particular, it holds
\begin{equation*}
\|Y_{\mathbf{n}}T_\mathbf{n}(f)- Y_{\mathbf{n}}T_\mathbf{n}(f_m)\|_1\le \|Y_{\mathbf{n}}\| \|T_\mathbf{n}(f-f_m)\|_1\le \frac{N(\mathbf{n})}{(2\pi)^k}\|f-f_m\|_{L^1}.
\end{equation*}
Since $\lim_{m\to \infty} \|f-f_m\|_{L^1}=0,$ we can conclude that $$\{Y_{\mathbf{n}}T_{\mathbf{n}}[f_m]\}_{\mathbf{n}}\xrightarrow{\text{a.c.s.\ wrt\ $m$}}\{Y_{\mathbf{n}}T_\mathbf{n}(f)\}_{\mathbf{n}}.$$
	
Finally, we prove that $ \{\psi_{|f_m|}\}_m\to \psi_{|f|}$ in $L^1([-\pi,\pi]^k)$, so that the convergence in measure is a direct consequence.

In particular, we have to prove that
\begin{equation*}
\lim_{m\to \infty}\int_{\Omega} |\psi_{|f|}-\psi_{|f_m|}| \,d\boldsymbol{\theta}=0,
\end{equation*}
where $\Omega=[-2\pi,0]^k\cup[0,2\pi]^k$.
Since $f$ and $f_m$ are $2\pi$-periodic functions, it holds

\begin{equation}
\begin{split}
\int_{\Omega} |\psi_{|f|}-\psi_{|f_m|}| \,d\boldsymbol{\theta}=&\int_{(0,2\pi]^k}|{|f|}-{|f_m|}| \,d\boldsymbol{\theta}+ \int_{[-2\pi,0)^k}|{-|f|}+{|f_m|}|  \,d\boldsymbol{\theta}=\\
&2\int_{[-\pi,\pi]^k}|{|f|}-{|f_m|}|  \,d\boldsymbol{\theta}\le 2 \int_{[-\pi,\pi]^k}|{f}-{f_m}|  \,d\boldsymbol{\theta}.
\end{split}
\end{equation}
Then, since  $\{f_m\}_m\to f$
in $L^1([-\pi,\pi]^k)$,

\begin{equation*}
\lim_{m\to \infty}\int_{\Omega} |\psi_{|f|}-\psi_{|f_m|}| \,d\boldsymbol{\theta}\le \lim_{m\to \infty} 2 \int_{[-\pi,\pi]^k}|{f}-{f_m}|  \,d\boldsymbol{\theta}=0,
\end{equation*}
which implies that $ \{\psi_{|f_m|}\}_m\to \psi_{|f|}$ in measure.

		\end{proof}

		Note that $\psi_{|f|}$ is not the unique function which describes the asymptotic spectral distribution of $\{Y_{\mathbf{n}}T_{\mathbf{n}}[f]\}_{\mathbf{n}}$. Indeed, it is possible to find a rearrangement  $\phi_{|f|}$ of $\psi_{|f|}$ 
(and viceversa) such that $\{Y_{\mathbf{n}}T_{\mathbf{n}}[f]\}_{\mathbf{n}} \sim_{\lambda} \phi_{|f|},$ see Corollary \ref{thm:main_phi}. In addition, under the hypotheses of separability of $f$ we can construct a spectral symbol $h_{f}$ with a tensor-product argument, see Proposition \ref{prop:separability}.

\begin{corollary}\label{thm:main_phi}
		Suppose $f \in L^1([-\pi,\pi]^k)$ is a $k$-variate function  with real Fourier coefficients,  periodically extended to the whole real plane. Let $Y_\mathbf{n}=Y_{n_1}\otimes \ldots \otimes Y_{n_k}=Y_{N(\mathbf{n})} \in \mathbb{R}^{N(\mathbf{n}) \times N(\mathbf{n})}$ be the anti-identity matrix. Let $T_\mathbf{n}[f]\in \mathbb{R}^{N(\mathbf{n}) \times N(\mathbf{n})}$ be the Toeplitz matrix generated by $f$. Then, 
		\[
		\{Y_{\mathbf{n}}T_{\mathbf{n}}[f]\}_{\mathbf{n}} \sim_{\lambda}  \phi_{|f|}
		\]
		over the domain  $[-2\pi,2\pi]^k$ with $\phi_g$ defined in the following way
		\[
		\phi_g(\boldsymbol{\theta})=\left\{
		\begin{array}{cc}
		g(\boldsymbol{\theta}), & \boldsymbol{\theta}\in [0,2\pi]^k, \\
		-g(-\boldsymbol{\theta}), & \boldsymbol{\theta}\in [-2\pi,0)^k.
		\end{array}
		\right.\,
		\]

	\begin{proof}
		We observe that for any $F$ continuous with bounded support
		\[
		\int_{[-2\pi,2\pi]^k} F(\phi_{|f|})=\int_{[-2\pi,2\pi]^k}  F(\psi_{|f|}),
		\]
		i.e. $\phi_{|f|}$ is a rearrangement of $\psi_{|f|}$ (and viceversa) \cite[Section 3.2]{SerraLibro1}. Hence, by the very definition of distribution, we have
		$\{Y_\mathbf{n} T_\mathbf{n}[f]\}_{\mathbf{n}} \sim_{\lambda}  \phi_{|f|}$ if and only if $\{Y_\mathbf{n} T_\mathbf{n}[f]\}_{\mathbf{n}} \sim_{\lambda}  \psi_{|f|}$. Therefore, the desired result is an immediate consequence of Theorem \ref{thm:main_L1}. 
	\end{proof}
\end{corollary}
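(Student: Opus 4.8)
The plan is to obtain the corollary as an immediate consequence of Theorem~\ref{thm:main_L1}, using the fact that in a spectral distribution result the symbol enters only through the functional it induces, i.e.\ only through its distribution function over its domain. By Definition~\ref{def:spectral_distribution}, proving $\{Y_\mathbf{n}T_\mathbf{n}[f]\}_\mathbf{n}\sim_\lambda\phi_{|f|}$ amounts to showing that the functional attached to $\phi_{|f|}$ coincides with the one attached to $\psi_{|f|}$, after which Theorem~\ref{thm:main_L1} finishes the job. Writing $\Omega=[-2\pi,0]^k\cup[0,2\pi]^k$ for the set on which both piecewise formulas actually define a function (so that $\mu_k(\Omega)=2(2\pi)^k$, the cube $[-2\pi,2\pi]^k$ in the statement being merely the ambient box containing $\Omega$ and entering only through this normalization), what has to be checked is the rearrangement identity
\[
\int_{\Omega}F\bigl(\phi_{|f|}(\boldsymbol{\theta})\bigr)\,d\boldsymbol{\theta}=\int_{\Omega}F\bigl(\psi_{|f|}(\boldsymbol{\theta})\bigr)\,d\boldsymbol{\theta},\qquad F\in C_c(\mathbb{C}).
\]

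To verify it I would split each integral over the pieces $[0,2\pi]^k$ and $[-2\pi,0)^k$. On $[0,2\pi]^k$ we have $\phi_{|f|}=\psi_{|f|}=|f|$, so the contributions match. On $[-2\pi,0)^k$ the two integrands are $F\bigl(-|f|(\boldsymbol{\theta}+\mathbf{p})\bigr)$ for $\psi_{|f|}$ and $F\bigl(-|f|(-\boldsymbol{\theta})\bigr)$ for $\phi_{|f|}$. The measure-preserving change of variables $\boldsymbol{\xi}=\boldsymbol{\theta}+\mathbf{p}$, which maps $[-2\pi,0)^k$ bijectively onto $[0,2\pi)^k$, sends the first integral to $\int_{[0,2\pi)^k}F\bigl(-|f|(\boldsymbol{\xi})\bigr)\,d\boldsymbol{\xi}$, while the measure-preserving change of variables $\boldsymbol{\xi}=-\boldsymbol{\theta}$, which maps $[-2\pi,0)^k$ bijectively onto $(0,2\pi]^k$, sends the second integral to $\int_{(0,2\pi]^k}F\bigl(-|f|(\boldsymbol{\xi})\bigr)\,d\boldsymbol{\xi}$. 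Since $[0,2\pi)^k$ and $(0,2\pi]^k$ differ by a null set, these two integrals are equal; adding back the common $[0,2\pi]^k$ contribution gives the displayed identity, i.e.\ $\phi_{|f|}$ is a rearrangement of $\psi_{|f|}$ (and conversely) in the sense of \cite[Section~3.2]{SerraLibro1}.

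Once the rearrangement identity is established, dividing by $\mu_k(\Omega)$ shows that $\phi_{|f|}$ and $\psi_{|f|}$ define the same functional, so by the very definition of asymptotic spectral distribution $\{Y_\mathbf{n}T_\mathbf{n}[f]\}_\mathbf{n}\sim_\lambda\phi_{|f|}$ holds if and only if $\{Y_\mathbf{n}T_\mathbf{n}[f]\}_\mathbf{n}\sim_\lambda\psi_{|f|}$; the latter is exactly Theorem~\ref{thm:main_L1}, applicable because $f\in L^1([-\pi,\pi]^k)$ has real Fourier coefficients. The part that I expect to require the most attention is not an estimate but the bookkeeping: the change of variables $\boldsymbol{\xi}=\boldsymbol{\theta}+\mathbf{p}$ relies on the $2\pi$-periodicity of $|f|$ inherited from the periodic extension of $f$, and one must use the same normalization constant $\mu_k(\Omega)$ on both sides, keeping in mind that $\phi_{|f|}$ is genuinely defined only on $\Omega$ and not on all of $[-2\pi,2\pi]^k$.
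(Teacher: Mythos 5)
Your argument is correct and follows the paper's proof: both establish that $\phi_{|f|}$ and $\psi_{|f|}$ induce the same functional (i.e.\ are rearrangements of each other) and then invoke Theorem \ref{thm:main_L1}. The only difference is that you verify the rearrangement identity explicitly via the changes of variables $\boldsymbol{\xi}=\boldsymbol{\theta}+\mathbf{p}$ and $\boldsymbol{\xi}=-\boldsymbol{\theta}$, where the paper simply asserts it with a citation, so your write-up is if anything more complete.
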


\section{Further results and remarks}\label{section:further}

The distribution results can be combined and complemented with analogous studies on the preconditioned matrix-sequences: in this sense the paper by Pestana already is a step in this direction, that is in the multilevel setting (refer to \cite{PestanaSIMAX}).

Furthermore, the proof techniques employed so far can be easily extended to the case of a generating function which is multivariate and matrix-valued, so covering the multilevel block setting as done in \cite{FFHMS} for the univariate case.

The case of separable generating functions deserves particular attention because one has beautiful tensor structures and hence the distributional results in the multilevel context can be directly deduced by the unilevel case, as shown in some detail in the next result. 

\begin{proposition}\label{prop:separability}
Suppose $f \in L^1([-\pi,\pi]^k)$ is a $k$-variate function periodically extended to the whole real plane. Assume that $f$ is $k$-separable, that is, there exist $k$ functions $f_i\in L^1([-\pi,\pi])$, $i=1,\dots,k$, such that $f = f_1 \otimes \dots \otimes f_k$. Suppose that the functions $f_i$, $i=1,\dots,k$, have real Fourier coefficients. Let $Y_\mathbf{n}=Y_{n_1}\otimes \ldots \otimes Y_{n_k}=Y_{N(\mathbf{n})} \in \mathbb{R}^{N(\mathbf{n}) \times N(\mathbf{n})}$ be the anti-identity matrix. Let $T_\mathbf{n}[f]\in \mathbb{R}^{N(\mathbf{n}) \times N(\mathbf{n})}$ be the Toeplitz matrix generated by $f$. Then, 
		\[
		\{Y_{\mathbf{n}}T_{\mathbf{n}}[f]\}_{\mathbf{n}} \sim_{\lambda}  h_{f}
		\]
		over the domain  $[-2\pi,2\pi]^k$, where the function $h_f$ is defined as
		\begin{equation}\label{eq:separability}
			h_f=\psi_{|f_1|}\otimes\dots \otimes\psi_{|f_k|}
		\end{equation}
and $\psi_{|f_i|}$, $i=1,\dots,k$, is defined as in equation (\ref{def:psi_unilevel}).
Furthermore, $\psi_{|f|}$, $\phi_{|f|}$ are both rearrangements of $h_f$ (and viceversa).
\end{proposition}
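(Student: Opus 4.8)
The plan is to reduce the $k$-level separable case to the unilevel Theorem~\ref{thm:main_old1d} by exploiting the tensor-product structure of $T_\mathbf{n}[f]$ and of $Y_\mathbf{n}$. First I would invoke Lemma~\ref{lemm:tensor_prod} to write $T_\mathbf{n}[f] = T_{n_1}[f_1]\otimes\dots\otimes T_{n_k}[f_k]$, and combine this with the factorization $Y_\mathbf{n}=Y_{n_1}\otimes\dots\otimes Y_{n_k}$. Using the mixed-product property of the Kronecker product, namely $(A_1\otimes\dots\otimes A_k)(B_1\otimes\dots\otimes B_k)=(A_1B_1)\otimes\dots\otimes(A_kB_k)$, one immediately gets
\[
Y_\mathbf{n}T_\mathbf{n}[f] = \bigl(Y_{n_1}T_{n_1}[f_1]\bigr)\otimes\dots\otimes\bigl(Y_{n_k}T_{n_k}[f_k]\bigr).
\]
Hence the spectrum of $Y_\mathbf{n}T_\mathbf{n}[f]$ is the set of all products $\lambda_{j_1}(Y_{n_1}T_{n_1}[f_1])\cdots\lambda_{j_k}(Y_{n_k}T_{n_k}[f_k])$ over all index choices.

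Next I would pass to the distributional statement. By Theorem~\ref{thm:main_old1d}, each unilevel sequence $\{Y_{n_i}T_{n_i}[f_i]\}_{n_i}$ is distributed in the eigenvalue sense as $\psi_{|f_i|}$ on $[-2\pi,2\pi]$. The natural tool here is the known fact that tensor products of matrix-sequences distributed as functions $g_i$ are distributed as the tensor-product function $g_1\otimes\dots\otimes g_k$ (this is a standard GLT/a.c.s.\ result; alternatively one can prove it directly from the description of the eigenvalues above by a Riemann-sum argument, approximating each $\psi_{|f_i|}$ by a simple function and using the a.c.s.\ machinery of Lemma~\ref{lem:Corollary5.1} exactly as in the proofs of Theorems~\ref{thm:main_pol}--\ref{thm:main_L1}). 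Applying it with $g_i=\psi_{|f_i|}$ yields directly
\[
\{Y_\mathbf{n}T_\mathbf{n}[f]\}_\mathbf{n}\sim_\lambda \psi_{|f_1|}\otimes\dots\otimes\psi_{|f_k|}=h_f
\]
over $[-2\pi,2\pi]^k$, which is the first assertion. One must check that $h_f$ is real-valued and measurable on a set of finite positive measure, which is immediate since each $\psi_{|f_i|}$ is real-valued on $[-2\pi,2\pi]$ and the domain has measure $(4\pi)^k$.

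For the final claim — that $\psi_{|f|}$ and $\phi_{|f|}$ are rearrangements of $h_f$ and conversely — I would argue by the uniqueness (up to rearrangement) of the distribution functional. Since Theorem~\ref{thm:main_L1} and Corollary~\ref{thm:main_phi} already give $\{Y_\mathbf{n}T_\mathbf{n}[f]\}_\mathbf{n}\sim_\lambda\psi_{|f|}$ and $\sim_\lambda\phi_{|f|}$, while the present proposition gives $\sim_\lambda h_f$, all three functionals $\eta_{\psi_{|f|}}$, $\eta_{\phi_{|f|}}$, $\eta_{h_f}$ coincide; by the characterization of rearrangement via equality of the induced measures (cf.\ \cite[Section 3.2]{SerraLibro1}), this is precisely the statement that they are rearrangements of one another. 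Alternatively, and more explicitly, one can verify $\int F(h_f)=\int F(\psi_{|f|})$ directly: using the identity $\psi_{|f|}(\boldsymbol\theta)=\mathrm{sgn}(\theta_1\cdots\theta_k)|f|(|\theta_1|,\dots,|\theta_k|)$ type splitting of the domain $[-2\pi,2\pi]^k$ into its $2^k$ orthants, one checks that the multiset of values of $h_f$ on $[-2\pi,2\pi]^k$ matches that of $\psi_{|f|}$ on $[-2\pi,0]^k\cup[0,2\pi]^k$ (extended by rearrangement), because $|f|=|f_1|\otimes\dots\otimes|f_k|$ and the sign pattern of a product of $k$ reals ranges over exactly the sign pattern produced by the tensor construction.

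The main obstacle I anticipate is the tensorization-of-distributions step: while intuitively clear and standard in the GLT framework, one should be careful that the unilevel symbols $\psi_{|f_i|}$ are only $L^1$ (not essentially bounded), so the cleanest route is not to quote a black-box tensor theorem but to reprove it via approximating classes — approximate each $f_i$ by trigonometric polynomials $f_{i,m}$, use Lemma~\ref{lemm:tensor_prod} and the mixed-product identity to get $\{Y_\mathbf{n}T_\mathbf{n}[f_{1,m}\otimes\dots\otimes f_{k,m}]\}_\mathbf{n}$ as an a.c.s.\ for $\{Y_\mathbf{n}T_\mathbf{n}[f]\}_\mathbf{n}$ (the trace-norm estimate follows by telescoping $f_1\otimes\dots\otimes f_k - f_{1,m}\otimes\dots\otimes f_{k,m}$ into $k$ terms each controlled by $\|f_i-f_{i,m}\|_{L^1}\prod_{l\ne i}\|f_{l}\|_{L^1}$ or $\|f_{l,m}\|_{L^1}$), verify the polynomial case by the explicit eigenvalue product formula and a Riemann-sum computation, and conclude via Lemma~\ref{lem:Corollary5.1}. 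Everything else is bookkeeping with Kronecker products.
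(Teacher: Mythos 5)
Your proposal follows essentially the same route as the paper's proof: factor $Y_{\mathbf{n}}T_{\mathbf{n}}[f]$ via Lemma~\ref{lemm:tensor_prod} and the mixed-product property of the Kronecker product, apply the unilevel Theorem~\ref{thm:main_old1d} to each factor, tensorize the resulting distributions, and deduce the rearrangement claim from the uniqueness (up to rearrangement) of the limiting functional together with Theorem~\ref{thm:main_L1} and Corollary~\ref{thm:main_phi}. The only difference is that you explicitly flag and sketch a justification (via a.c.s.\ approximation by trigonometric polynomials and a moment/Riemann-sum argument) for the tensorization-of-distributions step, which the paper simply asserts; this is a point in your favor rather than a deviation.
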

\begin{proof}
From Lemma \ref{lemm:tensor_prod} and the definition of $Y_{\mathbf{n}}$, we obtain
\begin{equation*}
Y_{\mathbf{n}}T_{\mathbf{n}}[f]= Y_{\mathbf{n}}T_{\mathbf{n}} [f_1 \otimes \dots \otimes f_k ]=(Y_{{n}_1}\otimes\dots \otimes Y_{{n}_k})(T_{ n_1} [ f_1 ] \otimes \dots \otimes T_{n_k} [ f_k ]).
\end{equation*}
From the mixed product property of the Kronecker product, we have
\begin{equation*}
Y_{\mathbf{n}}T_{\mathbf{n}}[f]=Y_{n_1}T_{n_1}[f_1]\otimes \dots \otimes Y_{n_k}T_{n_k}[f_k].
\end{equation*}
From Theorem \ref{thm:main_old1d}, the following spectral asymptotic results hold for $j=1,\dots,k$
\[\{Y_{n_j}T_{n_j}[f_j]\}_{n_j}\sim_{\lambda}\psi_{|f_i|}.\]
Hence, the spectrum of $\{Y_{\mathbf{n}}T_{\mathbf{n}}[f]\}_{\mathbf{n}}$ is asymptotically described by the tensor products of each  $\psi_{|f_i|}$, $i=1,\dots,k$. That is,

\begin{equation*}
\{Y_{\mathbf{n}}T_{\mathbf{n}}[f]\}_{\mathbf{n}} \sim_{\lambda}\psi_{|f_1|}\otimes\dots \otimes\psi_{|f_k|}= h_f.
\end{equation*}
Finally, by combining the last equation with Theorem \ref{thm:main_L1} and Corollary \ref{thm:main_phi}, by virtue of the definition of spectral distribution, we deduce that the functions $\psi_{|f|}$, $\phi_{|f|}$ are necessarily both rearrangements of $h_f$ (and viceversa).
\end{proof}

Note that a $k$-variate trigonometric polynomial of degree $\mathbf{r}=(r_1,r_2,\dots,r_k)$ with real Fourier coefficients $\hat{f}_{\mathbf{j}}$ has the explicit form
\begin{equation}\label{eq:pol_stability}
f(\boldsymbol{\theta})=\sum_{{j_1}=-{r_1}}^{r_1}\sum_{{j_2}=-{r_2}}^{r_2}\dots \sum_{{j_k}=-{r_k}}^{r_k}\hat{f}_{j_1,j_2,\dots,j_k}\E^{\iota\sum_{i=1}^kj_i\theta_i}.		
\end{equation}
That is, it can be written as finite sums of separable trigonometric polynomial $\{\hat{f}_{j_1,j_2,1\dots,j_k}\E^{\iota\sum_{i=1}^kj_i\theta_i}\}_{j_1,j_2,\dots,j_k}$.
Hence, on each term we can apply Lemma \ref{lemm:tensor_prod} and obtain, for $\mathbf{j}=-\mathbf{r},\dots,\mathbf{r}$,

\begin{equation*}
\left\{Y_{\mathbf{n}}T_{\mathbf{n}}\left[\hat{f}_{j_1,j_2,\dots,j_k}\E^{\iota\sum_{i=1}^kj_i\theta_i}\right]\right\}_{\mathbf{n}} \sim_{\lambda} \hat{f}_{j_1,j_2,1\dots,j_k}\left(\psi_{|\E^{\iota j_1\theta_1}|}\otimes\dots\otimes \psi_{|\E^{\iota j_k\theta_k}|}\right).
\end{equation*}
Therefore, an interesting further investigation is the study of the stability of the spectral distribution for the sum in (\ref{eq:pol_stability}). In fact, such a proof would lead to a simplification when treating distribution results for generic multilevel matrix-sequences.
 Finally, it is worth mentioning that all the results in Theorem \ref{thm:main_L1}, Corollary \ref{thm:main_phi}, Proposition \ref{prop:separability} also hold in the version for singular values for a very basic reason, so that 

\begin{equation}\label{3 sv results} 
\{Y_{\mathbf{n}}T_{\mathbf{n}}[f]\}_{\mathbf{n}} \sim_{\sigma}\psi_{|f|}, \ \ \  
\{Y_{\mathbf{n}}T_{\mathbf{n}}[f]\}_{\mathbf{n}} \sim_{\sigma}\phi_{|f|}, \ \ \  
\{Y_{\mathbf{n}}T_{\mathbf{n}}[f]\}_{\mathbf{n}} \sim_{\sigma} h_f, 
\end{equation} 

respectively. In reality, since $Y_{\mathbf{n}}T_{\mathbf{n}}[f]$ is real symmetric for every $\mathbf{n}$, by comparing the singular value decomposition and the Schur normal form, it is immediate to see that the singular values of $Y_{\mathbf{n}}T_{\mathbf{n}}[f]$ are the modulus of corresponding eigenvalues. As consequence, taking into account Theorem \ref{thm:singularvalue_multi}, we infer that $|\psi_{|f|}|$, $|\phi_{|f|}|$, $|h_f|$ are all rearrangements of $|f|$.

	\section{Numerical experiments}\label{section:numerics}
	
 In this section we numerically show that the statements of Theorem~\ref{thm:main_L1} and Proposition~\ref{prop:separability} are true in the cases of both  bivariate trigonometric polynomials and generic separable functions in $L^1([-\pi,\pi]^2)$. In particular we illustrate the predicted behaviour of the eigenvalues for the matrix-sequences $\{Y_{\mathbf{n}} T_{\mathbf{n}}[f]\}_{\mathbf{n}} $ for a function $f$ in the following cases. 
 \begin{itemize}
 \item Example 1.  $f$ is a bivariate trigonometric polynomial $f:[-\pi,\pi]^2\mapsto \mathbb{C}$ with high degree.
 \item Example 2.  $f$ is the bivariate separable trigonometric polynomial $f:[-\pi,\pi]^2\mapsto \mathbb{C}$, $$
		f(\theta_1,\theta_2) = \left(10-3{\rm e}^{\iota(\theta_1)}+{\rm e}^{\iota(-\theta_1)}\right)
							   \left(4-3{\rm e}^{\iota(\theta_2)}\right).$$
 \item Example 3. $f\in L^1([-\pi,\pi]^2)$ is the separable function
		$f(\theta_1,\theta_2) = \theta_1^2\theta_2^2$.
 \end{itemize}

	In order to numerically support the validity of the asymptotic spectral distribution provided by Theorem \ref{thm:main_L1} (resp. Proposition \ref{prop:separability}), we show that for large enough $n$ the eigenvalues of $Y_\mathbf{n}T_\mathbf{n}[f]$ are approximately equal to the samples of $\psi_{|f|}$ (resp. $h_f$) over a uniform grid. We remark that the theory admits the possible exception of outliers, whose number is infinitesimal respect to the dimension $N(\mathbf{n})$ of the matrix.  
	
Moreover, since the matrices $Y_\mathbf{n} T_\mathbf{n}[f]$ are symmetric for any $\mathbf{n} $, the values $\lambda_j(Y_\mathbf{n} T_\mathbf{n}[f])$ are real for $j=1,\dots,N(\mathbf{n})$. Then we can give an order to them according to the evaluation of $\psi_{|f|}$ (resp. $h_f$) on the proper grid.

	\begin{example}\label{ex_polyH}
		We consider the bivariate trigonometric polynomial $f:[-\pi,\pi]^2\mapsto \mathbb{C}$ defined by
		\begin{align}\label{eq:polyH}
				f(\theta_1,\theta_2) = &5+{\rm e}^{\iota(\theta_1)}+{\rm e}^{\iota(2\theta_1)}-3{\rm e}^{\iota(-\theta_1)}+2{\rm e}^{\iota(-2\theta_1)}
		+2{\rm e}^{\iota(\theta_2)}+{\rm e}^{\iota(2\theta_2)}-2{\rm e}^{\iota(-\theta_2)}+{\rm e}^{\iota(-2\theta_2)} \\ \nonumber
		&+{\rm e}^{\iota(\theta_1+\theta_2)}+3{\rm e}^{\iota(\theta_1-\theta_2)}+3{\rm e}^{\iota(-\theta_1+\theta_2)}+2{\rm e}^{\iota(-\theta_1-\theta_2)}
		+{\rm e}^{\iota(-2\theta_1+2\theta_2)}+{\rm e}^{\iota(\theta_1+2\theta_2)}+2{\rm e}^{\iota(-2\theta_1+\theta_2)}\\ \nonumber
		&+3{\rm e}^{\iota(2\theta_1+\theta_2)}
		+{\rm e}^{\iota(-2\theta_1-\theta_2)}+{\rm e}^{\iota(2\theta_1-\theta_2)}+{\rm e}^{\iota(-2\theta_1-2\theta_2)}+{\rm e}^{\iota(-\theta_1-2\theta_2)}
		+{\rm e}^{\iota(\theta_1-2\theta_2)}+{\rm e}^{\iota(2\theta_1-2\theta_2)}.
		\end{align}
		Hence $f$ is a trigonometric polynomial with real Fourier coefficients. In particular, the coefficients of $f$ can be represented in a more compact and elegant form as a $2D$ stencil
		\begin{equation}\label{eq:stencil_1}
		\begin{bmatrix}
		 1   &  1 &    1&     3 &    0\\
     1   & 3  &  1 &    1 &   1\\
     1     &-2   &  5  &  2   &  1\\
     1   &  2    & -3   &  3   & 0\\
     1    & 1   & 2    & 2     &1
		\end{bmatrix}.
		\end{equation}

	Hence, Theorem \ref{thm:main_L1} implies that the eigenvalues of $Y_\mathbf{n} T_\mathbf{n}[f]$ (properly sorted) are approximately equal to the samples of $\psi_{|f|}$ over the following grid $\xi_{j_1,j_2}^{(n_1,n_2)}$ of the domain $[-2\pi,0]^2\cup[0,2\pi]^2$

		\begin{equation}\label{eq:union_grid}
		\xi_{j_1,j_2}^{(n_1,n_1)} = \gamma_{j_1,j_2}^{(n_1,n_2)} \cup (\gamma_{j_1,j_2}^{(n_1,n_2)}-2\pi),
		\end{equation}
		where
		\begin{equation*}
	\gamma_{j_1,j_2}^{(n_1,n_2)}=2\pi \frac{(j_1,j_2)}{\left(\frac{n_1}{2}-1,n_2-1\right)}, \quad j_1=0,\dots,\frac{n_1}{2}-1, \quad j_2=0,\dots,{n_2}-1.
	\end{equation*}		
		
		We fix  $n_1=n_2=32$. Then, $Y_\mathbf{n} T_\mathbf{n}[f]\in \mathbb{R}^{N(\mathbf{n})\times N(\mathbf{n})}$ is a  $N(\mathbf{n})\times N(\mathbf{n})$ real symmetric matrix with eigenvalues $\lambda_{j}(Y_\mathbf{n} T_\mathbf{n}[f])$, $j=1,\dots,N(\mathbf{n})$ and $N(\mathbf{n})=1024$.
	In Figure \ref{fig:ex_polyH} we plot (in gray) the samples of $\psi_{|f|}$ over the grid $\xi_{j_1,j_2}^{(32,32)}$ and we can observe that they approximate without outliers the eigenvalues (colored dots) $\lambda_j(Y_{(32,32)} T_{(32,32)}[f])$, $j=1,\dots, 1024$ sorted according to the order given by $\psi_{|f|}$ over $\xi_{j_1,j_2}^{(32,32)}$.

		\begin{figure}
			\centering
			\includegraphics[width=\textwidth]{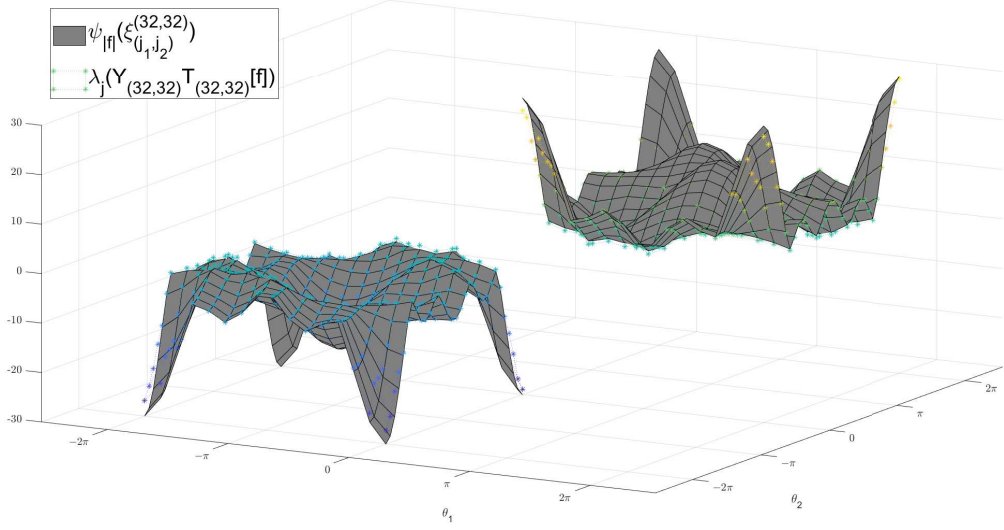}
			\vspace{-10pt}
			\caption{Example~\ref{fig:ex_polyH}, a comparison between the eigenvalues  $\lambda_j(Y_{(32,32)} T_{(32,32)}[f])$ and the samples $\psi_{|f|}$ over the grid $\xi_{(j_1,j_2)}^{(32,32)}$, where $f$ is defined by (\ref{eq:polyH}).
			}
			\label{fig:ex_polyH}
		\end{figure}
\end{example}
	
\begin{example}\label{ex_poly2}
		We consider the bivariate separable trigonometric polynomial $f:[-\pi,\pi]^2\mapsto \mathbb{C}$
	\begin{equation}\label{eq:ex_poly2}
		f(\theta_1,\theta_2) = \left(10-3{\rm e}^{\iota(\theta_1)}+{\rm e}^{\iota(-\theta_1)}\right)
							   \left(4-3{\rm e}^{\iota(\theta_2)}\right).
		\end{equation}
		In particular $f(\theta_1,\theta_2)$ can be decomposed, analogously  as $f$ of  Proposition \ref{prop:separability}, as the tensor of univariate polynomials $f_1(\theta_1)=10-3{\rm e}^{\iota(\theta_1)}+{\rm e}^{\iota(-\theta_1)}$ and $f_2(\theta_2)=4-3{\rm e}^{\iota(\theta_2)}$. Moreover, it has the stencil given by

		\begin{equation}\label{eq:stencil_2}
		\begin{bmatrix}
	0& -12& -3\\
       0& 40& -30\\
     0& 4&9\\
		\end{bmatrix}=\begin{bmatrix}
		-3\\
		10\\
		1
		\end{bmatrix}\begin{bmatrix}
		0& 4 & -3
		\end{bmatrix}.
		\end{equation}
	
This implies that we can provide two equivalent spectral distributions for the sequence $\{Y_\mathbf{n} T_\mathbf{n}[f]\}_{\mathbf{n}} $. Indeed, on the one hand, $f$ is a trigonometric polynomial with real Fourier coefficients, then, from Theorem \ref{thm:main_L1}
\begin{equation*}
\{Y_\mathbf{n} T_\mathbf{n}[f]\}_{\mathbf{n}} \sim_{\lambda} \psi_{|f|},
\end{equation*}
where $\psi_{|f|}$ is defined as in formula (\ref{eq:psi}). On the other hand, $f$ is a separable trigonometric polynomial satisfying the hypotheses of Proposition \ref{prop:separability}. Then,

\begin{equation*}
\{Y_\mathbf{n} T_\mathbf{n}[f]\}_{\mathbf{n}} \sim_{\lambda} h_f,
\end{equation*}  
   where $h_f$ is defined as in (\ref{eq:separability}).
   
   In Figure \ref{fig:ex_poly_sep} we numerically show that both the results are accurate in approximating the eigenvalues of $Y_\mathbf{n} T_\mathbf{n}[f]$, since $\psi_{|f|}$ is a rearrangement of $h_f$ and viceversa. 
  In particular, on the top of Figure \ref{fig:ex_poly_sep}, we can observe that the  eigenvalues $\lambda_j(Y_\mathbf{n} T_\mathbf{n}[f])$, $j=1,\dots, N(\mathbf{n})$, $\mathbf{n}=(32,32)$, are well approximated by the samplings of the function $\psi_{|f|}$ over the grid $\xi_{(j_1,j_2)}^{(n_1,n_2)}$. The grid $\xi_{(j_1,j_2)}^{(n_1,n_2)}$ of the domain $[-2\pi,0]^2\cup[0,2\pi]^2$, is defined as in (\ref{eq:union_grid}).
  Analogously, the bottom of Figure \ref{fig:ex_poly_sep} numerically confirms that an accurate approximation of the eigenvalues  $\lambda_j(Y_\mathbf{n} T_\mathbf{n}[f])$, $j=1,\dots, N(\mathbf{n})$, $\mathbf{n}=(32,32)$, can be computed by a uniform sampling of the function $h_f=\psi_{|f_1|}\otimes\psi_{|f_2|}$ over the domain $[-2\pi,2\pi]^2$. The uniform grid over  $[-2\pi,2\pi]^2$ is $\theta_{(j_1,j_2)}^{(32,32)}$ where
  \begin{equation}\label{eq:uniform_grid}
	\theta_{(j_1,j_2)}^{(n_1,n_2)}=4\pi \frac{(j_1,j_2)}{\left(n_1-1,n_2-1\right)}-2\pi, \quad j_1=0,\dots,n_1-1, \quad j_2=0,\dots,n_2-1.
\end{equation}   

		\begin{figure}
			\centering
			\includegraphics[width=\textwidth]{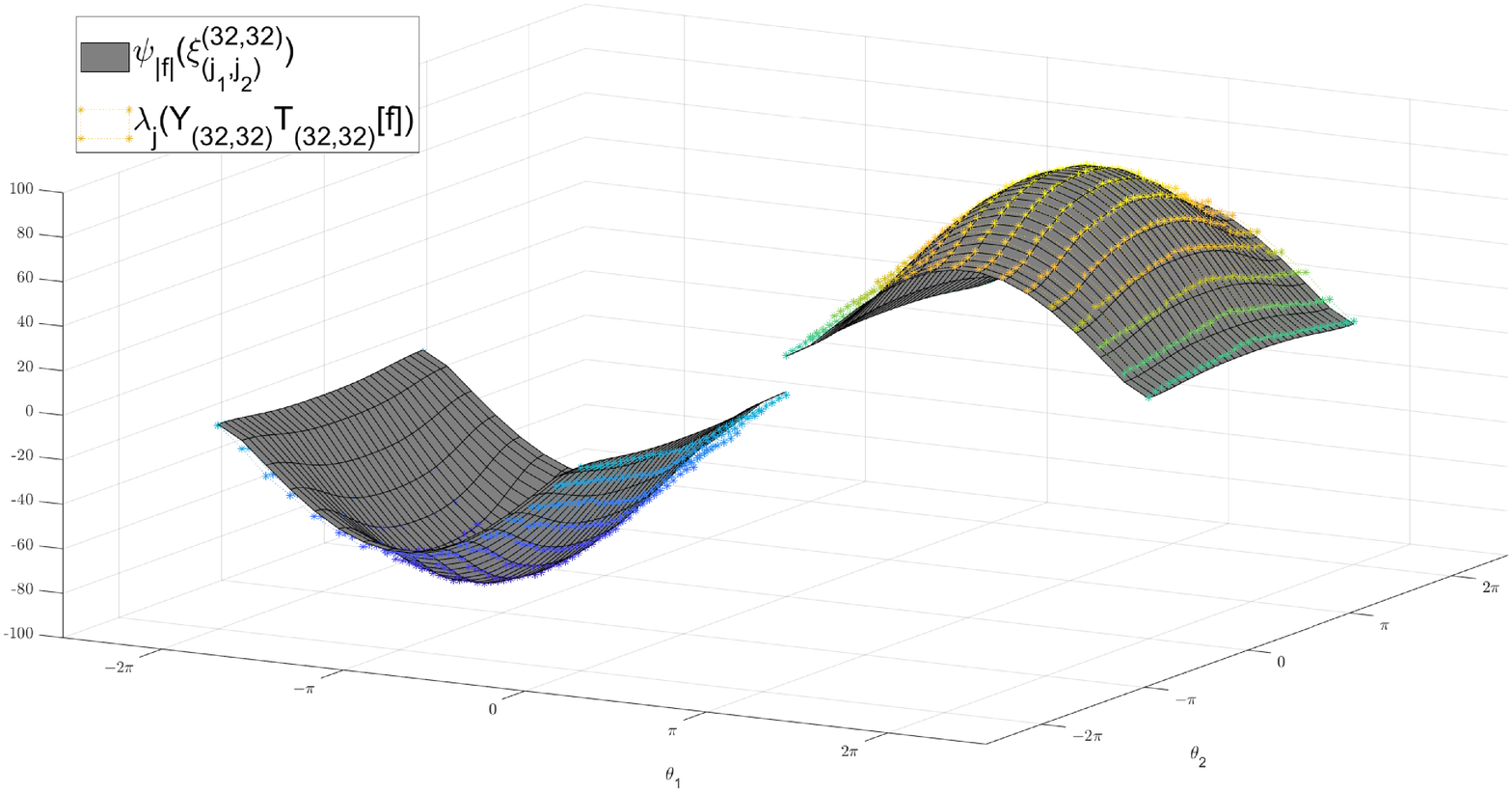}
			\includegraphics[width=\textwidth]{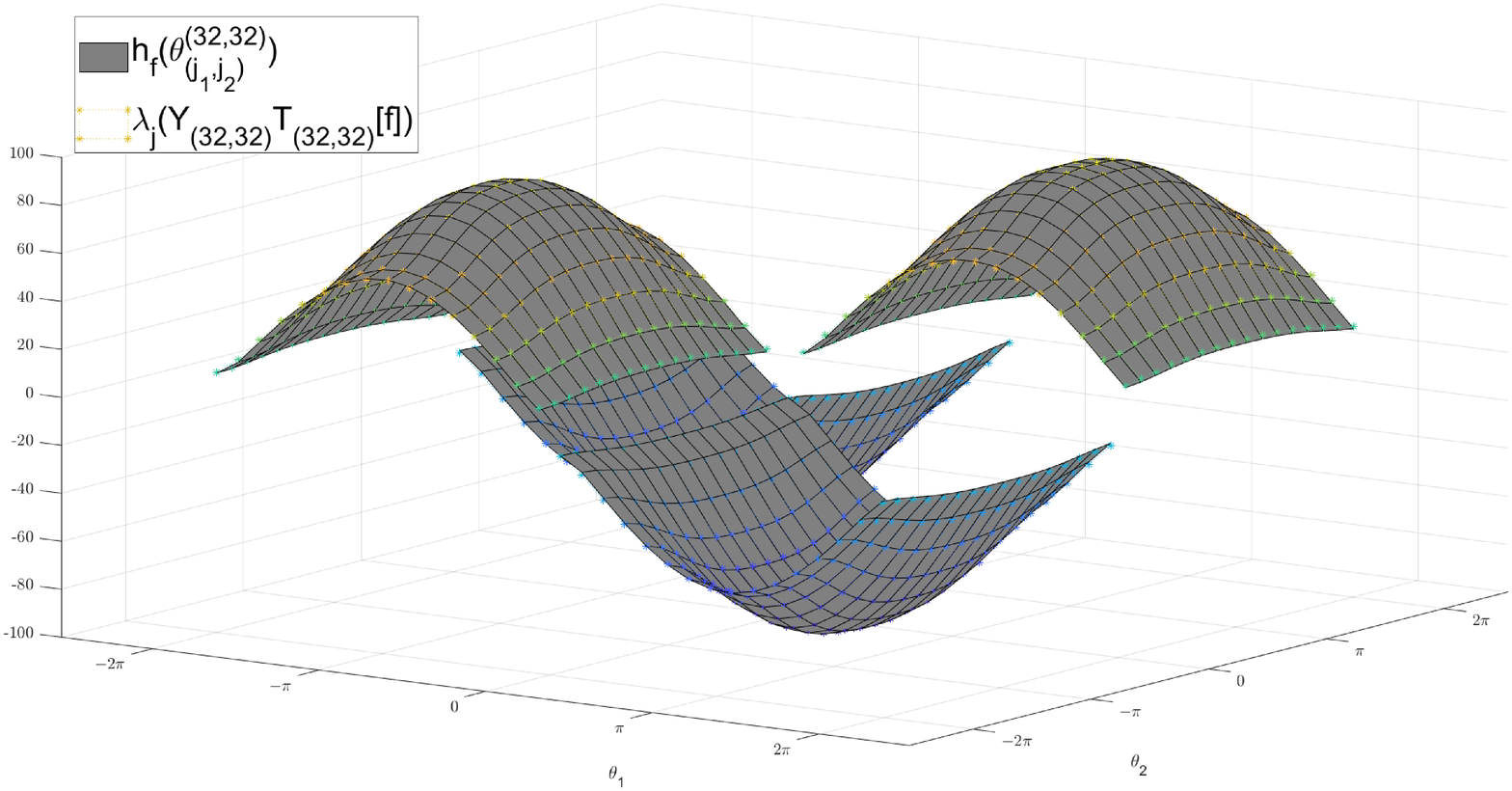}
			\vspace{-10pt}
			\caption{Example~\ref{ex_poly2},   a comparison between the eigenvalues  $\lambda_j(Y_{(32,32)} T_{(32,32)}[f])$ and  (top) the samples $\psi_{|f|}$ over the grid $\xi_{(j_1,j_2)}^{(32,32)}$ and (bottom) a uniform sampling of the function $h_f$ over the grid $\theta_{(j_1,j_2)}^{(32,32)}$. The function $f$ is defined by (\ref{eq:ex_poly2}).
			}
			\label{fig:ex_poly_sep}
		\end{figure}
		
	\end{example}
	
	\begin{example}\label{ex_teta2}
		In the last example we consider the function $f:[-\pi,\pi]^2 \rightarrow \mathbb{R}$ by
		\begin{equation}\label{eq:ex_teta2}
			f(\theta_1,\theta_2) = \theta_1^2\theta_2^2,	
		\end{equation}
	
		periodically extended to the real plane.
		
		The function $f$ is not a trigonometric polynomial, and consequently the matrices $T_\mathbf{n}[f]$ are dense, \textcolor{black}{for all $\mathbf{n}$}.  Note that $f$ is a bivariate separable function in $L^1([-2\pi,2\pi]^2)$ obtained as the tensor of univariate functions with real Fourier coefficients.
		
In particular the Fourier coefficients of the univariate function $\theta^2$ are given by the \textcolor{black}{formulae}
		\[
		\left\{
		\begin{array}{ll}
		a_0 = \frac{\pi^2}{3}, &  \\
		a_{k}=(-1)^k\frac{2}{k^2}, & k=\pm1,\pm2,\dots.
		\end{array}
		\right. .
		\]
	
Recalling that $f$ is defined on $[-\pi,\pi]^2$ and periodically extended to the real plane, we can write the following explicit formulae for $f$ in $[0,2\pi]^2$:
\[
		\left\{
		\begin{array}{ll}
		\theta_1^2\theta_2^2, & (\theta_1,\theta_2)\in [0,\pi]^2, \\
		\theta_1^2(\theta_2-2\pi)^2, & (\theta_1,\theta_2)\in [0,\pi]\times(\pi,2\pi],\\
		(\theta_1-2\pi)^2\theta_2^2, & (\theta_1,\theta_2)\in (\pi,2\pi]\times[0,\pi],\\
		(\theta_1-2\pi)^2(\theta_2-2\pi)^2, & (\theta_1,\theta_2)\in (\pi,2\pi]^2.
		\end{array}
		\right. 
\]	
		
Since $f$ is a function in $L^1([-\pi,\pi]^2)$ with real Fourier coefficients, then, from Theorem \ref{thm:main_L1}, we have the following spectral distribution result:
\begin{equation*}
\{Y_\mathbf{n} T_\mathbf{n}[f]\}_{\mathbf{n}} \sim_{\lambda} \psi_{|f|},
\end{equation*}
where $\psi_{|f|}$ is defined as in formula (\ref{eq:psi}). Moreover, $f$ is a separable function satisfying the hypotheses of Proposition \ref{prop:separability}, and so  also the following spectral distribution holds:
\begin{equation*}
\{Y_\mathbf{n} T_\mathbf{n}[f]\}_{\mathbf{n}} \sim_{\lambda} h_f,
\end{equation*}  
where $h_f$ is defined as in (\ref{eq:separability}).		
		
In Figure \ref{fig:ex_teta2} we numerically show that both the results are accurate in approximating the eigenvalues of $Y_\mathbf{n} T_\mathbf{n}[f]$. 
  In particular, on the top of Figure \ref{fig:ex_teta2}, we can observe that the  eigenvalues $\lambda_j(Y_\mathbf{n} T_\mathbf{n}[f])$, $j=1,\dots, N(\mathbf{n})$, $\mathbf{n}=(64,64)$, are well approximated by the samplings of the function $\psi_{|f|}$ over the grid $\xi_{(j_1,j_2)}^{(n_1,n_2)}$ defined in (\ref{eq:union_grid}).
  Analogously, the bottom of Figure \ref{fig:ex_teta2} numerically confirms that an accurate approximation of the eigenvalues  $\lambda_j(Y_\mathbf{n} T_\mathbf{n}[f])$, $j=1,\dots, N(\mathbf{n})$, $\mathbf{n}=(64,64)$, can be computed by a uniform sampling of the function $h_f=\psi_{|f_1|}\otimes\psi_{|f_2|}$ over the domain $[-2\pi,2\pi]^2$.

		\begin{figure}
			\centering
			\includegraphics[width=\textwidth]{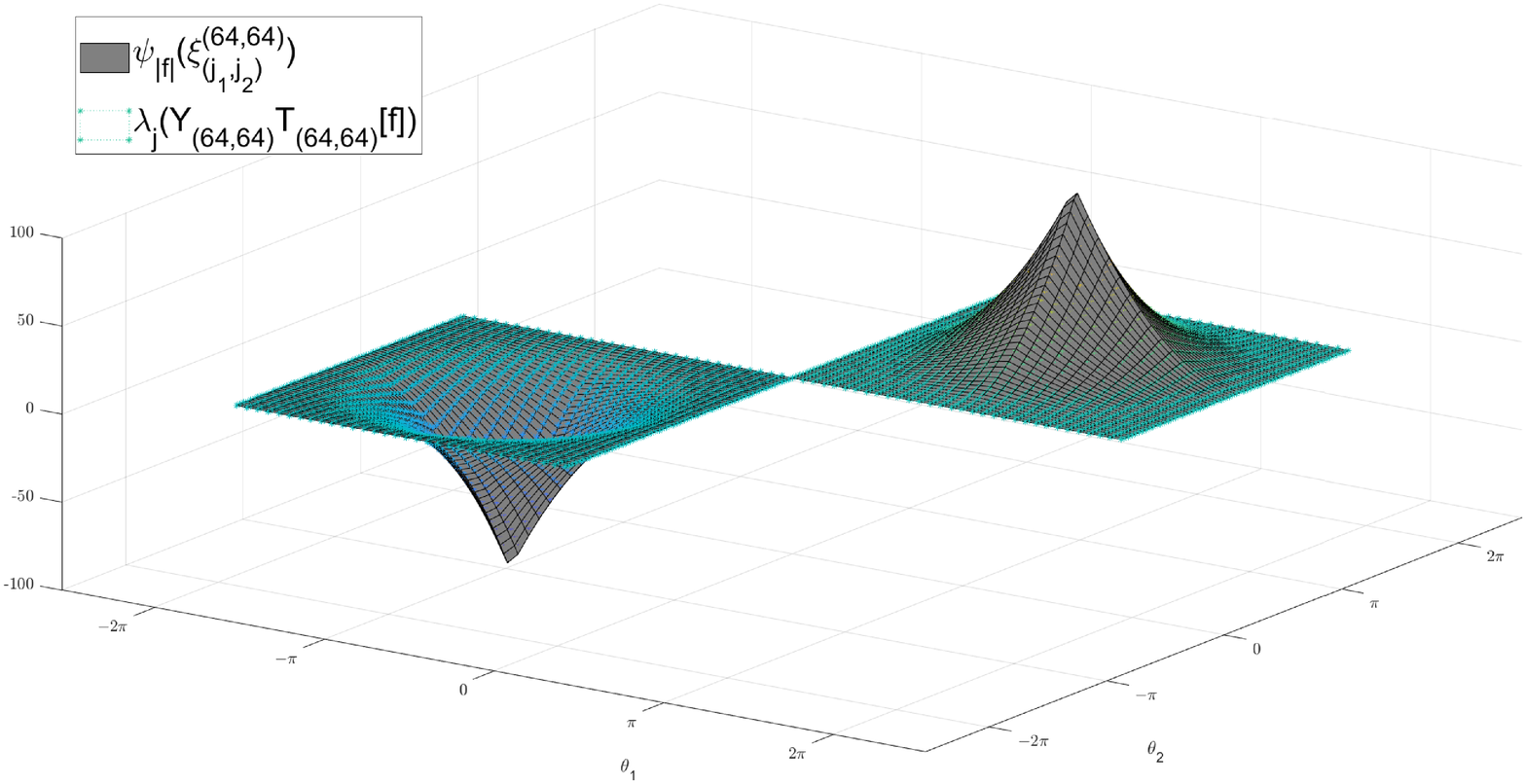}
			\includegraphics[width=\textwidth]{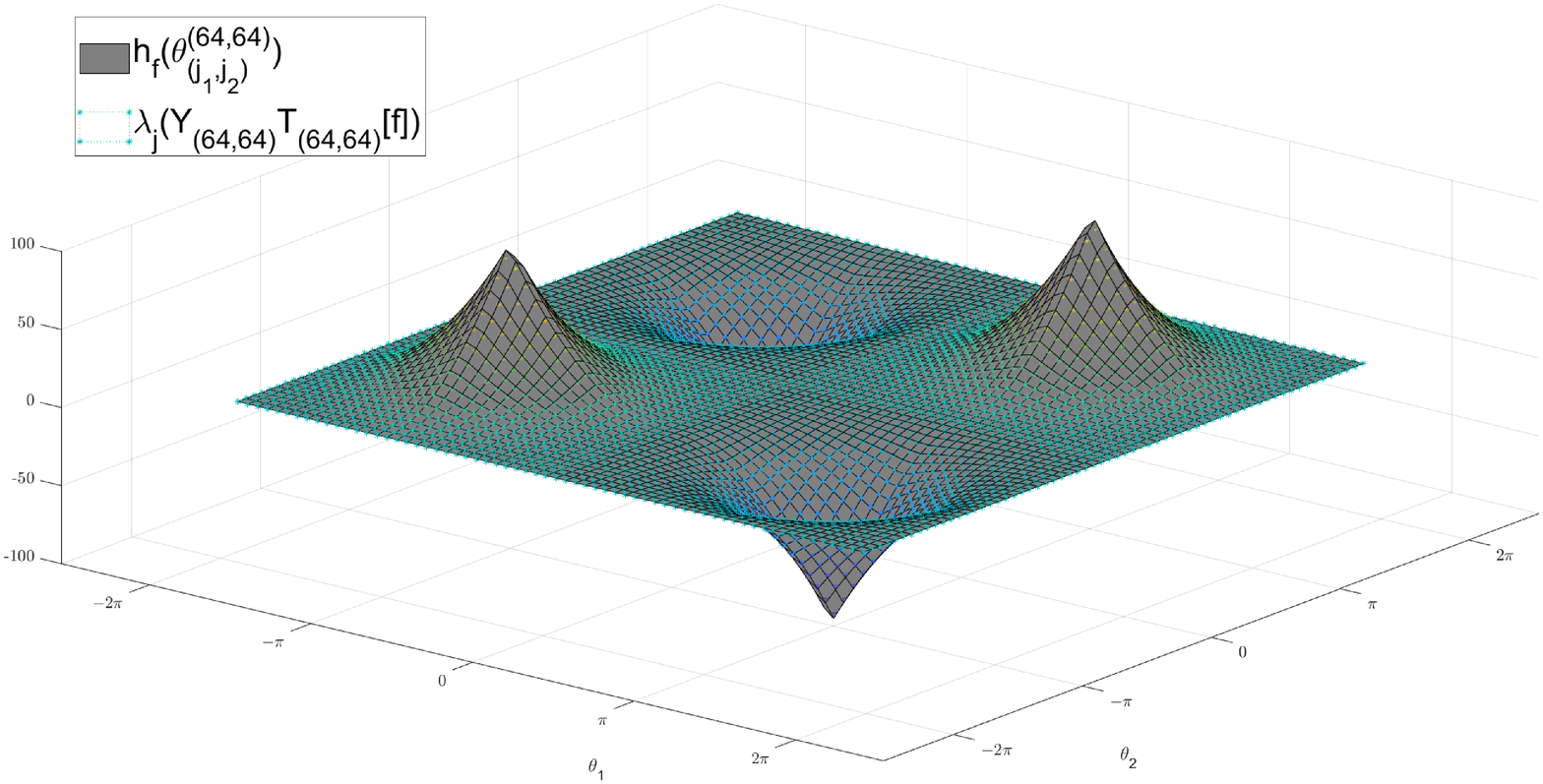}
			\vspace{-10pt}
			\caption{Example~\ref{ex_teta2},   a comparison between the eigenvalues  $\lambda_j(Y_{(64,64)} T_{(64,64)}[f])$ and  (top) the samples $\psi_{|f|}$ over the grid $\xi_{(j_1,j_2)}^{(64,64)}$ and (bottom) a uniform sampling of the function $h_f$ over the grid $\theta_{(j_1,j_2)}^{(64,64)}$. The function $f$ is defined by (\ref{eq:ex_teta2}).
			}
			\label{fig:ex_teta2}
		\end{figure}
	\end{example}
	
	\section{Conclusions}\label{sec:conclusion}
	
	We have extended the asymptotic results concerning the singular  value and spectral distribution for the symmetrization of multilevel Toeplitz-sequence.  
	Taking inspiration from the unilevel matrix-sequence $\{Y_nT_n[f]\}_n$, we focused of the case where $f$ is a $k$-variate function in $ L^1([-\pi,\pi])^k$ and  $\{Y_{\mathbf{n}}T_{\mathbf{n}}[f]\}_{\mathbf{n}} $ the symmetrization of a $k$-level Toeplitz-sequence. We analysed the case where $f$ is a trigonometric polynomial, and then we extend the result to $f\in L^1([-\pi,\pi]^k)$, $k>1$, by the notion and properties of the approximating class of sequences. In addition, we dedicated particular attention to the case where $f$ is a $k$-variate separable function. Under this additional hypothesis, the spectral distribution of the sequence $\{Y_{\mathbf{n}}T_{\mathbf{n}}[f]\}_{\mathbf{n}} $ can be obtained by a tensor product argument from the 1D setting. The study of possible simplifications when dealing with the class of polynomials will be
subject of the future investigation. Moreover, as a future goal we intend to exploit  the derived  knowledge of the asymptotic spectral and singular value distribution in order to design efficient solvers for large linear system stemming from practical applications and in this direction a first step is done in \cite{PestanaSIMAX,mazza-pestana-m}.

\bigskip
{\bf Acknowledgment.}  Paola Ferrari, Isabella Furci, and Stefano Serra-Capizzano are partially supported by the INdAM Research group GNCS.



\begin{thebibliography}{1}


		\bibitem{MR952991}
		F.~Avram.
		\newblock On bilinear forms in {G}aussian random variables and {T}oeplitz
		matrices.
		\newblock {\em \textcolor{black}{Probability Theory and Related Fields}}, 79(1):37--45, 1988.
		
		\bibitem{MR2376196}
		R.~Chan and X.~Jin.
		\newblock {\em An introduction to iterative {T}oeplitz solvers}, volume~5 of
		{\em Fundamentals of Algorithms}.
		\newblock Society for Industrial and Applied Mathematics (SIAM), Philadelphia,
		PA, 2007.
		
		
		\bibitem{FasinoTilli}
		D. Fasino and P. Tilli.
		\newblock Spectral clustering properties of block multilevel {H}ankel matrices.
		\newblock {\em Linear Algebra and its Applications}, 306(1-3):155--163, 2000.
%
		
		\bibitem{FFHMS}
		P. Ferrari and I. Furci and S. Hon and M. Mursaleen and S. Serra--Capizzano.
				\newblock The eigenvalue distribution of special 2-by-2 block  matrix-sequences with applications to the case of symmetrized {T}oeplitz structures.
		\newblock {\em SIAM Journal on Matrix Analysis and Applications}, 40(3):1066--1086, 2019.
		

		
		\bibitem{FBS}
		P. Ferrari and N. Barakitis and S. Serra--Capizzano.
				\newblock Asymptotic spectra of large matrices coming from the
                   symmetrization of {T}oeplitz structure functions and
                   applications to preconditioning
		\newblock {\em Numerical Linear Algebra with Applications}, e2332, 2020.
			
		
		\bibitem{SerraLibro1}
		C.~Garoni and S.~{S}erra--Capizzano.
		\newblock {\em Generalized locally {T}oeplitz sequences: theory and
			applications. {V}ol. {I}}.
		\newblock Springer, Cham, 2017.
	
				\bibitem{SerraLibro2}
		C.~Garoni and S.~{S}erra--Capizzano.
		\newblock {\em Generalized locally {T}oeplitz sequences: theory and
			applications. {V}ol. {II}}.
		\newblock Springer, Cham, 2018.
	
		
		
		


		
		\bibitem{MR3399336}
		C.~Garoni and S.~{S}erra--Capizzano and P.~Vassalos.
		\newblock A general tool for determining the asymptotic spectral distribution
		of {H}ermitian matrix-sequences.
		\newblock {\em \textcolor{black}{ Operators and Matrices}}, 9(3):549--561, 2015.
		
		\bibitem{MR890515}
		U.~Grenander and G.~Szeg{\H{o}}.
		\newblock {\em {T}oeplitz forms and their applications}.
		\newblock Chelsea Publishing Co., New York, second edition, 1984.
		
		
		\bibitem{Hackbusch}
		W. Hackbusch
		\newblock {\em Iterative solution of large sparse systems of
                   equations}.
		\newblock Springer-Verlag, New York, 2016.
	
		
		
%
		
%
		
%
                     \bibitem{mazza-pestana}
		M. Mazza and J.~Pestana.
		\newblock Spectral properties of flipped Toeplitz matrices.
            \newblock         {\em BIT Numerical Mathematics}, 59:463--482, 2019.

                     \bibitem{mazza-pestana-m}
		M. Mazza and J.~Pestana.
		\newblock The asymptotic spectrum of flipped multilevel Toeplitz matrices and of certain preconditionings.
            \newblock      {\em Archiv}, 2020.

		\bibitem{MR2108963}
		M.~Ng.
		\newblock {\em Iterative methods for {T}oeplitz systems}.
		\newblock Numerical Mathematics and Scientific Computation. Oxford University
		Press, New York, 2004.
		



		\bibitem{MR851935}
		S.~Parter.
		\newblock On the distribution of the singular values of {T}oeplitz matrices.
		\newblock {\em Linear Algebra and its Applications}, 80:115--130, 1986.

\bibitem{PestanaSIMAX}
		J.~Pestana.
		\newblock Preconditioners for symmetrized {T}oeplitz and multilevel {T}oeplitz
		matrices.
		\newblock {\em SIAM Journal on Matrix Analysis and Applications},
		40(3):870--887, 2019.
		
		\bibitem{doi:10.1137/140974213}
		J.~Pestana and A.~Wathen.
		\newblock A preconditioned {MINRES} method for nonsymmetric {T}oeplitz
		matrices.
		\newblock {\em SIAM Journal on Matrix Analysis and Applications},
		36(1):273--288, 2015.
		
		
	\bibitem{Saad}
		Y. Saad
		\newblock {\em Iterative methods for sparse linear systems}.
		\newblock Society for Industrial and Applied Mathematics,   Philadelphia, PA, 2003.		
		
		 
		
		
		\bibitem{CAPIZZANO2001121}
		S.~{S}erra--Capizzano.
		\newblock Distribution results on the algebra generated by {T}oeplitz
		sequences: a finite-dimensional approach.
		\newblock {\em Linear Algebra and its Applications}, 328(1):121--130, 2001.
		
		\bibitem{glt-1}
		S.~{S}erra--Capizzano.
		\newblock Generalized locally Toeplitz sequences: spectral analysis and 
		applications to discretized partial differential equations. 
		\newblock {\em Linear Algebra and its Applications},  366:371--402, 2003.
		
		\bibitem{glt-2}
		S.~{S}erra--Capizzano.
		\newblock The GLT class as a generalized Fourier analysis and applications.
		\newblock {\em Linear Algebra and its Applications},  419(1):180--233, 2006.
		
		\bibitem{MR1671591}
		P.~Tilli.
		\newblock A note on the spectral distribution of {T}oeplitz matrices.
		\newblock {\em Linear and Multilinear Algebra}, 45(2-3):147--159, 1998.
		
		\bibitem{MR1258226}
		E.~Tyrtyshnikov.
		\newblock New theorems on the distribution of eigenvalues and singular values
		of multilevel {T}oeplitz matrices.
		\newblock {\em \textcolor{black}{Doklady Akademii Nauk}}, 333(3):300--303, 1993.
		
		\bibitem{TYRTYSHNIKOV1994225}
		E.~Tyrtyshnikov.
		\newblock Influence of matrix operations on the distribution of eigenvalues and
		singular values of {T}oeplitz matrices.
		\newblock {\em Linear Algebra and its Applications}, 207:225--249, 1994.
		
		\bibitem{Tyrtyshnikov19961}
		E.~Tyrtyshnikov.
		\newblock A unifying approach to some old and new theorems on distribution and
		clustering.
		\newblock {\em Linear Algebra and its Applications}, 232:1--43, 1996.
		
		\bibitem{MR1481397}
		N.~Zamarashkin and E.~Tyrtyshnikov.
		\newblock Distribution of the eigenvalues and singular numbers of {T}oeplitz
		matrices under weakened requirements on the generating function.
		\newblock {\em \textcolor{black}{Matematicheskii Sbornik}}, 188(8):83--92, 1997.
		

\end{thebibliography}
\end{document}